\newtheorem{lemma}{Lemma}[section]
\newtheorem{theorem}[lemma]{Theorem}
\newtheorem{proposition}[lemma]{Proposition}
\newtheorem{corollary}[lemma]{Corollary}
\newtheorem{remark}[lemma]{Remark}
\newtheorem{example}[lemma]{Example}
\DeclareMathOperator{\supp}{Supp}
\DeclareMathOperator{\can}{Can}
\DeclareMathOperator{\lcm}{lcm}
\DeclareMathOperator{\Log}{Log}
\newcommand{\HS}{\mathbb{H}_y}
\title{Boolean ideals and their varieties}
\author{Samuel Lundqvist}
\date{}                                          
\begin{document}
\maketitle

%msc primary: 13P15
%msc secondary: 06E30,12Y05, 13P10, 68W30

\begin{abstract}
We consider ideals in the ring $\mathbb{Z}_2[x_1,\ldots, x_n]$ that contain the polynomials $x_i^2 - x_i$ for $i = 1, \ldots, n$ and give various results related to the one-to-one correspondence between these ideals and the subsets of $\mathbb{Z}_2^n$. 
We also study the standard monomials with respect to the lexicographical ordering for these ideals and derive a distribution result.
\end{abstract}

\section{Introduction}

Suppose we are given polynomials $f_1,\ldots,f_m$ in $n$ variables over a field 
$\Bbbk$. The problem of describing the set of $\Bbbk$-rational solutions to the system $f_1= 0,\ldots, f_m = 0$ is one of the crucial problems in computational algebraic geometry. In this paper, we study this 
zero set when $\Bbbk = \mathbb{Z}_2,$ the field with two elements. 

The usual algebraic approach to the problem of finding the number of $\mathbb{Z}_2$-rational solutions is by means of Gr\"obner bases, and a number of papers in this direction has appeared over the last few years, for  instance, see \cite{bardet,polybori,newdev, gerdt, sato}.  
Here we present an alternative approach using fine Hilbert series of monomial ideals, and with no use of the theory of Gr\"obner bases.

We say that an ideal $I$ in $\mathbb{Z}_2[x_1,\ldots,x_n]$ is \emph{boolean} if $x_i^2 - x_i \in I$ for $i = 1, \ldots, n$. A \emph{boolean monomial} is a square free monomial in $\mathbb{Z}_2[x_1,\ldots,x_n]$ and a \emph{boolean polynomial} is a sum of different boolean monomials.
It is well known, see \cite[page 11]{atiyah}, that there is a one-to-one correspondence between boolean ideals and boolean polynomials. 
Indeed, each boolean ideal can be written uniquely as $(x_1^2 - x_1,\ldots, x_n^2 - x_n, f)$, where $f$ is boolean. We call such an $f$ the \emph{defining polynomial} of the ideal $(x_1^2 - x_1,\ldots, x_n^2 - x_n, f)$.
Along with the one-to-one correspondence between boolean ideals and subsets of $\mathbb{Z}_2^n$, we investigate properties of boolean ideals, especially their varieties, but also their standard monomials, in terms of their defining polynomial.

 In Section \ref{sec:algebra}, we derive simple formulas for quotients, sums, products and intersections of boolean ideals in terms of their defining polynomials. Also, we give a canonical form with respect to a boolean ideal without using the notion of Gr\"obner bases (or monomial orderings).
 
In Section \ref{sec:zeroset}, the main results of this paper is presented, and we give a detailed description of the correspondence between zero sets and the defining polynomials.
A surprising result is the following.
Consider a boolean polynomial $f$ in $\mathbb{Z}_2[x_1,\ldots,x_n]$ and regard each element in the rational zero set of $f$ as an exponent vector of a boolean monomial and add these boolean monomials together to construct a new boolean polynomial $\phi(f)$. We show in Theorem \ref{thm:fquad} that the fourth power of $\phi$ is the identity. For instance, the polynomial $x_1 x_2 + x_2 \in \mathbb{Z}_2[x_1,x_2]$ gives rise to the chain
$$x_1 x_2 +x_2 \mapsto x_1^1 x_2^1 + x_1^1 x_2^0 + x_1^0 x_2^0 = x_1 x_2 +x_1 + 1 \mapsto  x_1  \mapsto x_2 + 1 \mapsto x_1 x_2 + x_2$$
 when we apply $\phi$ repeatedly. 
 
 The purpose of this paper is not computational, but in Section \ref{sec:eff} we show that there are  
 instances of problems where the method proposed in this paper outperforms the Buchberger algorithm, motivating a further analysis of the computational aspects of the methods.

In Section \ref{sec:standard}, we give a recursive correspondence between a boolean polynomial and the lexicographical standard monomials of the associated boolean ideal, inspired by the Cerlienco-Mureddu correspondence \cite{cm}.
The correspondence is then used to give a recursive formula which, to a given boolean monomial, determines the number of ideals having this particular monomial as a standard monomial, and we show that this number is always odd.

\section{Notation}
 Points in $\mathbb{Z}_2^n$ will be denoted by the letters $p$ and $q$, and the $i$'th coordinate of the point $p$ is denoted by $p_i$. Sometimes $p_i$ and $q_j$ will be representing points, and in these cases, the coordinate of a point is written using double indices.  
  
 The symbols $\alpha$ and $\beta$ will be used to represent exponent vectors of boolean monomials, and as such, they will be elements in $\{0,1\}^n$. 
We will continue the abuse of notation from the introduction and regard points in $\mathbb{Z}_2^n$ as exponent vectors in $\{0,1\}^n$, and exponent vectors in  $\{0,1\}^n$ as points in $\mathbb{Z}_2^n$.

We evaluate a boolean polynomial $f$ at an affine point $p$ in $\mathbb{Z}_2^n$ by the notation $f(p)$ and we introduce the concept of a zero set for a boolean polynomial $f$, $V(f)$, defined as 
$$V(f) := V((x_1^2 - x_1, \ldots, x_n^2 - x_n, f)) = \{p \in \mathbb{Z}^n_2 \, : \, f(p) = 0\}.$$

We say that two polynomials $f,g \in \mathbb{Z}_2[x_1,\ldots, x_n]$ are equal as boolean polynomials if $f \equiv g$ modulo $(x_1^2 -x_1, \ldots, x_n^2 - x_n)$. This is expressed as "$f = g$ as boolean polynomials". Moreover, the expression "the boolean polynomial $f \cdot g$" is short notation for the boolean polynomial $h$ such that $f \cdot g \equiv h$ modulo $(x_1^2 -x_1, \ldots, x_n^2 - x_n)$.

If $A$ is a subset of $\mathbb{Z}_2^n$, we denote by $A^c$ the complement of 
$A$ in $\mathbb{Z}_2^n$, and given a boolean polynomial $f = m_1 +\cdots + m_s$, let $\supp(f) = \{m_1, \ldots, m_s\}$.

\section{Boolean polynomials, boolean ideals and subsets of $\mathbb{Z}_2^n$}

The one-to-one correspondences mentioned in the introduction are wide spread, but for completeness and future reference we state them in a lemma and give a short proof.  

\begin{lemma} \label{lemma:one-to-one-correspondences}
Let $\mathbb{Id}$ denote the set of boolean ideals in $n$ variables, let $\mathbb{P}$ denote the set of boolean polynomials in $n$ variables and let $\mathbb{V}$ denote the power set of $\mathbb{Z}_2^n$. Then the maps 
\begin{itemize}
\item $\mathbb{P} \to \mathbb{V},  f \mapsto V(f),$
 
\item $\mathbb{I} \to \mathbb{V},  I \mapsto V(I),$

\item $\mathbb{P} \to \mathbb{I},  f \mapsto (x_1^2 - x_1, \ldots, x_n^2 - x_n,f),$

\end{itemize}
are bijective and the defining polynomial of the boolean ideal $I = (x_1^2 - x_1, \ldots, x_n^2 - x_n, f_1, \ldots, f_s)$ is $(f_1+1)\cdots(f_s+1) + 1$.
\end{lemma}

\begin{proof}
We have already given a reference for the fact that the third map is bijective. The second map is bijective since boolean ideals are radical, indeed, $a^2 = a$ holds for all $a \in \mathbb{Z}_2[x_1,\ldots,x_n]/(x_1^2 - x_1, \ldots, x_n^2 - x_n)$.
We can then compose the third and the second map to show that the first map is one-to-one. 

Finally, let $I = (x_1^2 - x_1, \ldots, x_n^2 - x_n, f_1, \ldots, f_s)$. If $p \in V(I)$, then $((f_1+1) \cdots (f_s+1) + 1)(p) =  1 \cdots 1 + 1 = 0$ and if $p \notin V(I)$, then $f_i(p) = 1$ for some $i$, so 
$(f_1+1) \cdots (f_s+1) + 1)(p) = 1$. It follows that $V(I) = V((f_1+1) \cdots (f_s+1) + 1)$ and since the second map is bijective, we must have that $I = (x_1^2 - x_1, \ldots, x_n^2 - x_n, (f_1+1) \cdots (f_s+1) + 1).$ 
\end{proof}

A consequence of the bijective property of the first map is that the equation $V(f) = \mathbb{Z}_2^n$ has the unique solution $f = 0$ and that the equation $V(f) = \emptyset$ has the unique solution $f = 1$.

In this paper, the focus is on the first and the third map and their inverses, but before we begin, let us briefly discuss the second map and its relation to the SAT problem.

In Theoretical Computer Science, the SAT problem is the problem to determine whether a given boolean formula is satisfiable or not. 
Applications are found within a variety of areas, including complexity theory and formal verification. It is straightforward to interpret a boolean formula as a system of boolean polynomials such that there is a one-to-one correspondence between the the satisfiable assignments and the solutions to the system defined by the boolean polynomials. Thus, the special case 
$$ I \mapsto 
\begin{cases}
\text{false }& \text{if } V(I) = \{ \} \\
\text{true } & \text{if } V(I) \neq \{ \}
\end{cases}
$$
 of the second map can be used as a SAT solver, and the connection to the SAT problem is indeed a major motivation for the development of the Gr\"obner techniques for boolean ideals. 
 
Notice that since the defining polynomial of $I = (x_1^2 - x_1, \ldots, x_n^2 - x_n, f_1, \ldots, f_s)$ is $(f_1+1)\cdots(f_s+1) + 1$, it is possible to test if $V(I)$ is empty by checking if $(f_1+1)\cdots(f_s+1) + 1 = 1$ as boolean polynomials. However, from a computational point of view, a direct implementation of this approach fails to be effective due to a massive term expansion on the left hand side.

\subsection{A monomial order free canonical form, ideal membership and other ideal operations} \label{sec:algebra}
We will now study some fundamental algebraic properties of boolean ideals.

\begin{lemma} \label{lemma:intersectionvar}
Let $f$ and $g$ be boolean polynomials. Then
$$V(f)  \cap V(g) = V(f+g+fg).$$
\end{lemma}
\begin{proof}
Suppose that $p \in V(g)^c$. Then $(f+g+fg)(p)  = 1$. On the contrary, suppose that $p \in V(g)$. Then 
$(f+g+fg)(p)  = f(p)$. Hence, $p \in V(f+g+fg)$  if and only if $p \in V(f)  \cap V(g)$. 
\end{proof}

\begin{lemma} \label{lemma:rec}
Let $f$ and $g$ be boolean polynomials. Then 
$$V(f+g) = (V(f) \cap V(g)) \cup (V(f)^c \cap V(g)^c).$$
\end{lemma}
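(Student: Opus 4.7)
The plan is a straightforward pointwise verification using the fact that addition in $\mathbb{Z}_2$ is the XOR operation. The key observation is that for any point $p \in \mathbb{Z}_2^n$, the value $(f+g)(p) = f(p) + g(p)$ lives in $\mathbb{Z}_2$, where a sum of two elements vanishes if and only if the two summands are equal.

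First, I would fix an arbitrary $p \in \mathbb{Z}_2^n$ and argue by a short case split. The condition $p \in V(f+g)$ unwinds to $f(p) + g(p) = 0$ in $\mathbb{Z}_2$, which is equivalent to $f(p) = g(p)$. I would then split on the common value: either $f(p) = g(p) = 0$, which exactly says $p \in V(f) \cap V(g)$; or $f(p) = g(p) = 1$, which exactly says $p \in V(f)^c \cap V(g)^c$. The union of these two possibilities is precisely the right-hand side, so both inclusions follow simultaneously.

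There is no real obstacle here, since the entire argument is the characteristic-$2$ identity $a+b=0 \iff a=b$ applied pointwise. I would not invoke Lemma \ref{lemma:intersectionvar} or any of the earlier one-to-one correspondence results; a direct pointwise argument is shorter and more transparent. The only thing worth noting in the write-up is that the union on the right is automatically disjoint, since $V(f) \cap V(f)^c = \{\}$, though this disjointness is not actually needed for the equality to hold.
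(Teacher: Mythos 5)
Your proof is correct and follows essentially the same route as the paper: both arguments are direct pointwise case analyses on the values $f(p)$ and $g(p)$ in $\mathbb{Z}_2$. The paper simply enumerates the three cases (both zero, exactly one zero, neither zero) rather than phrasing the observation as $f(p)+g(p)=0 \iff f(p)=g(p)$, but the substance is identical.
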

\begin{proof}
If $p \in V(f) \cap V(g)$, then $(f+g)(p) = f(p) + g(p) = 0$, so $p \in V(f+g)$.
If $p \in V(f)$ but $p \notin V(g)$ or vice versa, then $(f+g)(p) = 1$, so $p \notin V(f+g)$.
Finally, if $p \in V(f)^c \cap V(g)^c$, then $(f+g)(p) =  1 + 1 = 0$, so $p \in V(f+g)$.
\end{proof}

\begin{lemma} \label{lemma:complement}
Let $f$ be a boolean polynomial. Then
$$V(f+1) = V(f)^c.$$
\end{lemma}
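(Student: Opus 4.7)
The plan is to argue pointwise in $\mathbb{Z}_2^n$. For any $p \in \mathbb{Z}_2^n$, the evaluation map is a ring homomorphism, so $(f+1)(p) = f(p) + 1$ where the arithmetic is in $\mathbb{Z}_2$. Hence $(f+1)(p) = 0$ is equivalent to $f(p) = 1$, which is exactly the condition $p \notin V(f)$, i.e., $p \in V(f)^c$.

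First I would state the chain of equivalences $p \in V(f+1) \iff (f+1)(p) = 0 \iff f(p) = -1 = 1 \iff p \notin V(f) \iff p \in V(f)^c$, taking $p$ to be an arbitrary element of $\mathbb{Z}_2^n$. Since this equivalence holds for every $p$, the two sets coincide as subsets of $\mathbb{Z}_2^n$.

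The only subtlety worth a sentence is the observation that we are working over $\mathbb{Z}_2$, so that $f(p)$ takes values only in $\{0,1\}$ and $f(p) \neq 0$ forces $f(p) = 1$; hence $1 + f(p) = 0$ iff $f(p) \neq 0$. Nothing here requires Lemma \ref{lemma:square} or anything beyond the definitions of $V(\cdot)$ and evaluation at a point, so there is no real obstacle — the lemma is essentially a direct unwinding of definitions, serving as a basic building block for combining with Lemmas \ref{lemma:intersectionvar} and \ref{lemma:rec} to handle unions, complements, and intersections in later sections.
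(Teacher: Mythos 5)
Your proof is correct, but it takes a different route from the paper. The paper's proof is a one-line deduction: apply Lemma \ref{lemma:rec} with $g = 1$, noting that $V(1) = \{\}$, so $V(f+1) = (V(f) \cap \{\}) \cup (V(f)^c \cap \mathbb{Z}_2^n) = V(f)^c$. You instead argue directly from the definitions by evaluating pointwise, which is perfectly valid and in fact inlines the special case of the pointwise reasoning that the paper already carried out once in proving Lemma \ref{lemma:rec}. The tradeoff is standard: your version is self-contained and makes the $\mathbb{Z}_2$-arithmetic explicit, while the paper's version is more economical and keeps the logical dependency structure tight by reusing the already-established general lemma. Either is acceptable; in a paper where Lemma \ref{lemma:rec} immediately precedes this one, the shorter derivation is the natural choice, and your observation at the end that the lemma is ``a direct unwinding of definitions'' is precisely why the paper can afford to dispatch it in a single sentence.
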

\begin{proof} Immediate.
\end{proof}

Let $S$ be a ring and let $I$ be an ideal in $S$. A \emph{canonical form} with respect to $I$ is a function $\phi$ from $S$ to $S$ such that $\phi(a) = \phi(b)$ if and only if $a = b$ in $S/I$.

Suppose that $S$ is a polynomial ring and that $I$ is an ideal in $S$, and that we have computed a Gr\"obner basis $G$ of $I$ with respect to some monomial ordering $\prec$. Then the map from $S$ to $S$ defined as reduction modulo $G$ is a canonical form. 

However, different monomial orderings give rise to different canonical forms. 
But in our setting, it turns out to be possible to introduce a canonical form without any assumptions on monomial orderings.

\begin{proposition} \label{prop:canonicalform}

Let $I$ be a boolean ideal in $\mathbb{Z}_2[x_1,\ldots,x_n]$ and let $f$ be the defining boolean polynomial, i.e. $I = (x_1^2 - x_1, \ldots, x_n^2 - x_n, f)$.
Let $g$ be a polynomial in $\mathbb{Z}_2[x_1, \ldots, x_n]$. 
Then the map $\can_f:  \mathbb{Z}_2[x_1,\ldots,x_n] \to \mathbb{Z}_2[x_1,\ldots,x_n]$ defined by letting $\can_f(g)$ be the boolean polynomial $(f+1)\cdot g$ is
a canonical form with respect to $I$.
\end{proposition}

\begin{proof}

We need to show that $g_1+ I = g_2 + I$ if and only if $\can_f(g_1) = \can_f(g_2)$. 

By definition, $\can_f(g_1) = \can_f(g_2) $
is equivalent to 
$(g_1 + g_2)(f+1) = 0$ as boolean polynomials and $g_1+ I = g_2 + I$ is equivalent to $g_1 + g_2 \in I.$

Suppose that $g_1 + g_2 \in I$. Then $(g_1 + g_2)(p) = 0$ if $p \in V(f)$, and if $p \in V(f)^c$, then $(f+1)(p) = 0$ by Lemma \ref{lemma:complement}. Thus $(g_1 + g_2)(f+1)$ is identically zero on each point in 
$\mathbb{Z}_2^n$ and must be equal to zero by the one-to-one correspondence between boolean polynomials and subsets of $\mathbb{Z}_2^n$ given in Lemma \ref{lemma:one-to-one-correspondences}.

Suppose instead that $(g_1 + g_2)(f+1) = 0$ as boolean polynomials. Since $(f+1)(p) = 1$ if $p \in V(f)$, it follows that $(g_1+g_2)(p) = 0$ if $p \in V(f)$. Hence $g_1 + g_2 \in I$.
\end{proof}

\begin{proposition} \label{prop:equivalances}

Let $I$ be a boolean ideal with defining boolean polynomial $f$. Let $g$ be a polynomial in $\mathbb{Z}_2[x_1,\ldots,x_n]$ and let 
$J = (x_1^2 - x_1, \ldots, x_n^2 - x_n, g)$.
The following are equivalent.

\begin{enumerate}

\item $g \in I$.

\item $J \subseteq I.$

\item $V(I) \subseteq V(J)$.

\item $V(f) \subseteq V(g)$.

\item $f \cdot h = g$ as boolean polynomials for some polynomial $h$.

\item $f \cdot g = g$ as boolean polynomials.

\item $V(f) \cap V(g+1) = \{\}$.

\item $(f+1) \cdot g = 0$ as boolean polynomials.

\item $\can_f(g) = 0$ as boolean polynomials.

\end{enumerate}

\end{proposition}

\begin{proof}

\noindent

$(1) \Leftrightarrow (2)$: If $g \in I$, then $(x_1^2 - x_1, \ldots, x_n^2 - x_n, g) \subseteq I$. If $(x_1^2 - x_1, \ldots, x_n^2 - x_n, g) \subseteq I$, in particular, $g \in I.$

$(3) \Leftrightarrow (4)$: By definition.

$(4) \Rightarrow (2)$: It holds that $g(p) = 0$ for all $p \in V(f)$. Hence $g \in I.$ 

$(2) \Rightarrow (4)$: If $g \in I$ we have $g(p) = 0$ for all $p \in V(I)$. Hence $V(f) \subseteq V(g)$.

$(4) \Rightarrow (7)$: If $V(f) \subseteq V(g)$, then $V(f) \cap V(g)^c = \{ \}$. But 
$V(g)^c = V(g+1)$ by Lemma \ref{lemma:complement}. 

$(7) \Leftrightarrow (8):$ By Lemma \ref{lemma:intersectionvar}, 
$V(f) \cap V(g+1) = \{\}$ is equivalent to $V((f+1) \cdot g + 1) = \{\}$. By the one-to-one correspondence between boolean polynomials and subsets of $\mathbb{Z}_2^n$ in Lemma \ref{lemma:one-to-one-correspondences} this is equivalent to $(f+1) \cdot g +1 = 1 \Leftrightarrow (f+1) \cdot g = 0$ as boolean polynomials.

$(8) \Leftrightarrow (9):$ By definition.

$(6) \Leftrightarrow (8)$: Add $g$ on both sides in (6). 

$(6) \Rightarrow (5)$: The implication is trivial. 

$(5) \Rightarrow (4)$: Since $V(f \cdot h) = V(f) \cup V(h)$ it follows that $V(f) \subseteq V(g)$.

\end{proof}

The last result in this section is a formula which shows how to perform the common ideal operations on boolean ideals in terms of their defining polynomials.

\begin{theorem} \label{thm:idealop}
Let $I$ and $J$ be boolean ideals and let $f$ and $g$ be the defining 
polynomials of $I$ and $J$ respectively. Then 
\noindent

\begin{enumerate}

\item $I:J = (x_1^2 - x_1, \ldots, x_n^2 - x_n, 1 + g + fg).$ 

\item  $I + J =  (x_1^2 - x_1, \ldots, x_n^2 - x_n, f+g+fg).$

\item $IJ = I \cap J = (x_1^2 - x_1, \ldots, x_n^2 - x_n, fg).$
\end{enumerate}

\end{theorem}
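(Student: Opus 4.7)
The overall strategy is to exploit Corollary \ref{corollary:onetoone}: two boolean ideals are equal precisely when their varieties in $\mathbb{Z}_2^n$ coincide. So for each of the three identities I would compute the variety of the ideal on the left-hand side, compute the variety of the boolean polynomial proposed as defining polynomial on the right, and invoke the correspondence. Lemmas \ref{lemma:intersectionvar} and \ref{lemma:complement} will do essentially all of the polynomial bookkeeping.

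Part (3) is the cleanest. Since $(fg)(p)=0$ iff $f(p)=0$ or $g(p)=0$, we have $V(fg)=V(f)\cup V(g)=V(I)\cup V(J)$. Standard ideal theory gives $V(IJ)=V(I\cap J)=V(I)\cup V(J)$, so both $IJ$ and $I\cap J$ agree with $(x_i^2+x_i,fg)$ as boolean ideals by Corollary \ref{corollary:onetoone}. If one prefers an algebraic check, equivalence (6) in Theorem \ref{theorem:equivalances} makes $I\cap J$ immediate: any $h$ in the intersection satisfies $fh=h$ and $gh=h$, hence $(fg)h=h$, so $h$ lies in the boolean ideal generated by $fg$; the reverse containment is obvious. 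Part (2) is equally short: $V(I+J)=V(I)\cap V(J)=V(f)\cap V(g)$, which by Lemma \ref{lemma:intersectionvar} equals $V(f+g+fg)$; expanding in $\mathbb{Z}_2$ shows $(f+1)(g+1)+1=fg+f+g$, giving the stated defining polynomial.

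The genuine work is in (1). I would first establish $V(I:J)=V(I)\setminus V(J)$. Unwinding the definition, $h\in I:J$ iff $hg'\in I$ for every $g'\in J$, which by Theorem \ref{theorem:equivalances} means $(hg')(p)=0$ for every $p\in V(I)$ and every $g'\in J$. For $p\in V(I)\cap V(J)$ this is automatic; for $p\in V(I)\setminus V(J)$, specializing $g'=g$ (the defining polynomial of $J$) gives $g(p)=1$ and forces $h(p)=0$; points outside $V(I)$ impose no constraint. Hence $V(I:J)=V(f)\cap V(g)^c$. Lemma \ref{lemma:complement} rewrites $V(g)^c=V(g+1)$, and Lemma \ref{lemma:intersectionvar} then gives $V(f)\cap V(g+1)=V(f+(g+1)+f(g+1))=V(1+g+fg)$. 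One more application of Corollary \ref{corollary:onetoone} completes the argument. The colon case is the main obstacle because it is the only one requiring one to reason about an arbitrary element of $J$ rather than just its defining polynomial; (2) and (3) follow almost mechanically from the variety-level identities and Lemma \ref{lemma:intersectionvar}.
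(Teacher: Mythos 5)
Your proof mirrors the paper's: each ideal identity is checked at the level of varieties, using Lemmas \ref{lemma:intersectionvar} and \ref{lemma:complement} to identify the proposed defining polynomial, and Corollary \ref{corollary:onetoone} to convert variety equality back into ideal equality. For part~(1), your pointwise analysis of $hg'\in I$ (automatic on $V(f)\cap V(g)$, forced via $g'=g$ on $V(f)\cap V(g)^c$, vacuous off $V(f)$) carries the same content as the paper's argument, which specializes to $b=g$ for one direction and argues set-theoretically for the other.

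Two remarks. In part~(2), your computation $V(I+J)=V(f)\cap V(g)=V(f+g+fg)=V\bigl((f+1)(g+1)+1\bigr)$ is actually more careful than the paper's: the paper's proof text asserts that $p\in V(I+J)$ gives $p\in V(f)\cap V(g)$ \emph{or} $p\in V(f)^c\cap V(g)^c$, but that disjunction describes $V(f+g)$ (Lemma \ref{lemma:rec}), not $V(I+J)$; the stated conclusion is nevertheless correct, and your version repairs the intermediate step. In part~(3), note that the product ideal $IJ$ in $\mathbb{Z}_2[x_1,\ldots,x_n]$ need not contain the field polynomials (already in one variable with $f=g=0$, $IJ=(x_1^4+x_1^2)$ does not contain $x_1^2+x_1$), so invoking Corollary \ref{corollary:onetoone} for $IJ$ implicitly reads $IJ$ modulo the field ideal or replaces it with $IJ+(x_1^2+x_1,\ldots,x_n^2+x_n)$. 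This gap is inherited from the theorem's own statement --- the paper's proof has the same issue when it infers $I\cap J\subseteq IJ$ merely from $fg\in IJ$ --- so it is not a defect specific to your argument, but it is worth flagging.
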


\begin{proof}
From standard textbooks in computational algebra, see for instance \cite{cox}, we have the general equalities 
$I(V):I(W) = I(V \setminus W)$, $V(I+J)  = V(I) \cap V(J)$ and $V(IJ) = V(I \cap J) = V(I) \cup V(J)$, where $I(V)$ denotes the vanishing ideal with respect to the variety $V$. 
By the one-to-one correspondences in Lemma \ref{lemma:one-to-one-correspondences}, it is enough to show 
\begin{enumerate}
\item $V(f) \setminus V(g)=V(1+g+fg)$.
\item $V(f) \cap V(g) = V(f+g+fg)$.
\item $V(fg) = V(f) \cup V(g)$.
\end{enumerate}
The second equality follows from Lemma \ref{lemma:intersectionvar} and the third one is trivial.

Let us now prove the first equality. Take $p \in V(f)$. If $p \notin V(g)$, then $g(p) = 1$ and $p \in V(1+g+fg)$. If $p \in V(f)$ but $p \in V(g)$, then $(1+g+fg)(p) = 1$. Finally, if $p \notin V(f)$, then $(1+g+fg)(p) = 1.$ This shows that
$V(f) \setminus V(g)=V(1+g+fg)$.

\end{proof}

\begin{remark}

Notice that the defining polynomial of $I:J$ is equal to $\can_f(g)+1$.

\end{remark}

\begin{example}

Let $$I = (x_1^2 - x_1, x_2^2-x_2, x_3^2 - x_3, x_1x_2 + x_3, x_1 x_3 + x_2, x_3 + 1) \subseteq \mathbb{Z}_2[x_1,x_2,x_3].$$ Since $$(x_1x_2 + x_3 + 1) \cdot (x_1 x_3 + x_2 + 1) \cdot  (x_3 + 1 + 1) + 1 = x_1x_2x_3 + x_3 + 1$$
as boolean polynomials, we have 
$$ I = (x_1^2 - x_1, x_2^2 - x_2, x_3^2 - x_3, f),$$ where $f = x_1x_2x_3 + x_3 + 1$. 
Since $f \neq 1$, it follows that $V(f) \neq \{ \}$ by Lemma \ref{lemma:one-to-one-correspondences}. Let $g = 1+x_1 x_3$. Then $\can_f(g) = (x_1x_2x_3 + x_3)(1+x_1 x_3) = x_1x_3 + x_3$ as boolean polynomials, and since $ x_1x_3 + x_3 \neq 0$, it follows that 
$g \notin I$ by Proposition \ref{prop:equivalances}. 
However, the element $ h = 1+ x_3 \in I$, since
$\can_f(h) =  0$. Thus $V(x_1 x_2 x_3 + x_3 + 1) \subseteq V(x_3 + 1) .$

In fact, $V(f) = \{(1,0,1), (0,0,1), (0,1,1) \}$, $V(g) = \{(1,0,1), (1,1,1)\}$ and 
$V(h) = \{(0,0,1), (0,1,1), (1,0,1), (1,1,1)\}$, and since $V(f) \cup V(g) = V(h)$, we obtain the factorization $f g = h$.

Let $J = (x_1^2 - x_1, x_2^2-x_2, x_3^2 - x_3, h).$ We have
$I:J = (x_1^2 - x_1, x_2^2-x_2, x_3^2 - x_3, 1 + h + fh)$ by Theorem \ref{thm:idealop}. Since $1 + h + fh = 1$ as boolean polynomials, it follows that $I:J = (1)$ which is the expected result since $V(I) \subseteq V(J)$. 

\end{example}

\subsection{The relationship between a boolean polynomial and its zero set} \label{sec:zeroset}

We will now go deeper into the relationship between a boolean polynomial and its zero sets by using fine Hilbert series of monomial ideals.

Let $M$ be an $\mathbb{N}^n$-graded module over an $\mathbb{N}^n$-graded algebra, so that
$M = \oplus_{\alpha \in N^n} M_{\alpha}$. The \emph{fine 
Hilbert series} of $M$ is
$$\mathbb{H}_t(M) = \sum_{\alpha \in \mathbb{N}^n} \dim_{\Bbbk} M_{\alpha}t^{\alpha}.$$

It will be convenient to work with two different algebras, thus we introduce the algebra 
$\mathbb{Q}[y_1,\ldots,y_n]/(y_1^2, \ldots, y_n^2)$. This algebra is $\mathbb{N}^n$-graded, 
and we shall use it in the context of fine Hilbert series.

There is a one-to-one correspondence between monomials in $$\mathbb{Q}[y_1,\ldots,y_n]/(y_1^2, \ldots, y_n^2)$$ and boolean monomials in $$\mathbb{Z}_2[x_1,\ldots,x_n].$$ Indeed, every monomial $x^{\alpha}$ corresponds to a monomial $y^{\alpha}$ and vice versa.

The $x$-logarithm of a monomial $x^p$ is $\Log_x(x^p) = p$ and the $x$-logarithm of a boolean polynomial $f=x^{\alpha_1}+ \cdots + x^{\alpha_s}$ is $\Log_x(f) = \{\Log(x^{\alpha_1}),\ldots, \Log(x^{\alpha_s})\}= \{\alpha_1, \ldots, \alpha_s\}.$ It will be convenient to work also with a $y$-logarithm. The $y$-logarithm is defined in the same way for polynomials in $\mathbb{Q}[y_1,\ldots,y_n]/(y_1^2, \ldots, y_n^2)$ with coefficients in $\{0,1\}$, i.e. polynomials $y^{\alpha_1}+ \cdots + y^{\alpha_s}$ such that $x^{\alpha_1} + \cdots + x^{\alpha_s}$ is a boolean polynomial. Since it will be clear from the context which logarithm we mean, we will simply write $\Log$ instead of $\Log_x$ or $\Log_y$. 

We
define $x^{\{p_1, \ldots, p_s \}}$ as $x^{p_1} + \cdots + x^{p_s}$, where each $p_i$ is a point in $\mathbb{Z}_2^n$ (but as an exponent regarded as a point in $\{0,1\}^n$).

The exponent and the logarithm will together be used to give the map from polynomials in $\mathbb{Q}[y_1,\ldots,y_n]/(y_1^2, \ldots, y_n^2)$ with coefficients in $\{0,1\}$ 
to the set of boolean polynomials. Indeed, if $f$ is a polynomial in $\mathbb{Q}[y_1,\ldots,y_n]/(y_1^2, \ldots, y_n^2)$  with coefficients in $\{0,1\}$, 
then $x^{\Log(f)}$ is the corresponding boolean polynomial. The map will be needed in the context of evaluation, the reason being that if $f \in \mathbb{Q}[y_1,\ldots,y_n]/(y_1^2, \ldots, y_n^2)$ with coefficients in $\{0,1\}$, and $p \in\mathbb{Z}_2^n$,  then $f(p)$ is not defined, but $x^{\Log(f)}(p)$ is.

\begin{lemma} \label{lemma:divide}
$x^q (p) = \begin{cases}
1 &\text{if } x^q | x^p \\
0 &\text{otherwise}
\end{cases}$
\end{lemma}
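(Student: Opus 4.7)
The plan is to unpack the evaluation $x^q(p)$ coordinate-by-coordinate and observe that divisibility of boolean monomials reduces to pointwise inequality of exponent vectors. This is essentially an observation rather than a computation, so the write-up should be brief.

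First I would note that, by definition, $x^q(p) = \prod_{i=1}^{n} p_i^{q_i}$, where both $p_i$ and $q_i$ lie in $\{0,1\}$. Since $p_i^0 = 1$ and $p_i^1 = p_i$ under the convention $0^0 = 1$, the factor corresponding to index $i$ equals $1$ whenever $q_i = 0$ and equals $p_i$ whenever $q_i = 1$. Hence
\[
x^q(p) = \prod_{i : q_i = 1} p_i,
\]
which is a product of elements of $\{0,1\}$ and therefore equals $1$ precisely when every factor is $1$, and equals $0$ otherwise.

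Next I would translate the condition ``every factor is $1$'' into a divisibility condition. The product above equals $1$ iff $p_i = 1$ for every index $i$ with $q_i = 1$; equivalently, $q_i \leq p_i$ for all $i \in \{1, \ldots, n\}$, which is exactly the condition that $x^q$ divides $x^p$ (recalling that both exponent vectors are in $\{0,1\}^n$, so $x^p$ is the squarefree monomial supported on the coordinates where $p_i = 1$, and $x^q \mid x^p$ means the support of $q$ is contained in the support of $p$). Combining the two equivalences gives the claim.

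There is no real obstacle here; the only thing to be careful about is interpreting the notation correctly, namely that $x^q$ and $x^p$ are boolean monomials whose exponent vectors $q, p \in \{0,1\}^n$ are regarded simultaneously as points of $\mathbb{Z}_2^n$, in accordance with the convention established in the Preliminaries. Once this identification is made, divisibility of squarefree monomials coincides with coordinatewise $\leq$ of the associated $0/1$-vectors, which is the pointwise statement that falls out of the product formula.
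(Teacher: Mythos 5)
Your proof is correct and takes essentially the same approach as the paper's: evaluate the monomial coordinatewise, observe that $x^q(p)=1$ iff $q_i=1$ implies $p_i=1$, and identify this with divisibility of squarefree monomials. You simply spell out the intermediate product formula that the paper leaves implicit.
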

\begin{proof}
We have $x^q (p) = 1$ if and only if $q_i = 1$ implies $p_i = 1$, which is equivalent to the statement in the lemma.
\end{proof}

\begin{proposition} \label{prop:even} Let
$f = x^{\alpha_1} + x^{\alpha_s}$ be a boolean polynomial in $\mathbb{Z}_2[x_1,\ldots,x_n]$. Then 
$$V(f) = \{p \, :\, \text{ the number of } x^{\alpha_i} \text{ that is divisible by $x^p$ is even}\}.$$
\end{proposition}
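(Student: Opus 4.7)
The plan is to evaluate $f$ at an arbitrary point $p \in \mathbb{Z}_2^n$ and unpack the condition $f(p) = 0$ directly in $\mathbb{Z}_2$.

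First, I would expand
$$f(p) = m_1(p) + m_2(p) + \cdots + m_s(p),$$
where the sum lives in $\mathbb{Z}_2$. By Lemma \ref{lemma:divide}, each summand $m_i(p)$ lies in $\{0,1\}$, and $m_i(p) = 1$ precisely when $m_i \mid x^p$ (equivalently, when the exponent vector of $m_i$ is coordinatewise dominated by $p$).

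Since the addition is performed in $\mathbb{Z}_2$, the sum $f(p)$ vanishes if and only if an even number of the summands are equal to $1$. Translating this back through the divisibility criterion of Lemma \ref{lemma:divide} yields exactly the claimed description of $V(f)$: a point $p$ lies in $V(f)$ precisely when the cardinality of $\{i : m_i \mid x^p\}$ is even.

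There is no real obstacle here; the proposition is an immediate corollary of Lemma \ref{lemma:divide} combined with the elementary fact that in $\mathbb{Z}_2$ a sum equals zero iff the number of nonzero terms is even. The only mild subtlety is keeping the direction of divisibility straight when reading the statement: it is the exponent vector of each $m_i$ that must be dominated by $p$, so the condition effectively counts how many of the $m_i$ divide $x^p$.
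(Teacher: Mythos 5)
Your proof is correct and follows exactly the same route as the paper's: evaluate $f$ at $p$, invoke Lemma \ref{lemma:divide} to turn each $m_i(p)$ into an indicator of divisibility, and observe that the $\mathbb{Z}_2$-sum vanishes iff the count of $1$'s is even. You also correctly resolved the direction of divisibility (the $m_i$ dividing $x^p$, i.e.\ the exponent of $m_i$ dominated by $p$), which is what the paper's proof actually uses despite the wording of the statement.
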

\begin{proof}
Let $t$ denote the number of $x^{\alpha_i}$ that is divisible by $x^p$. Then $f(p) = \underbrace{1 + \cdots + 1}_{t \text{ times}}$ by Lemma \ref{lemma:divide}. Hence $f(p) = 0$ if and only if $t$ is even.
\end{proof}

\begin{theorem} \label{thm:explicit1}

Let $f = x^{\alpha_1} + \cdots + x^{\alpha_s}$ be a boolean polynomial in $\mathbb{Z}_2[x_1,\ldots,x_n]$. Then
$$V(x^{\alpha_1} + \cdots + x^{\alpha_s}) = \Log(h_0 - h_1 + h_2 - h_3 +  \cdots + (-1)^{s} h_s)$$
where
\begin{align*}
h_0& = \HS((1)),\\
\nonumber
h_1& = \HS((y^{\alpha_1},\ldots, y^{\alpha_s})),\\
\nonumber
h_2& =  \HS((\lcm(y^{\alpha_1},y^{\alpha_2}), \lcm(y^{\alpha_1},y^{\alpha_3}),\ldots,\lcm(y^{\alpha_{s-1}}, y^{\alpha_s})), \\
\nonumber
h_3& =  \HS((\lcm(y^{\alpha_1},y^{\alpha_2},y^{\alpha_3}), \lcm(y^{\alpha_1},y^{\alpha_3},y^{\alpha_4}),\ldots,\lcm(y^{\alpha_{s-2}},y^{\alpha_{s-1}}, y^{\alpha_s})), \\
\nonumber
\vdots \\
\nonumber
h_s& =  \HS((\lcm(y^{\alpha_1},\ldots, y^{\alpha_s})),
\end{align*}

and where all the ideals are in the $\mathbb{N}^n$-graded algebra $\mathbb{Q}[y_1,\ldots,y_n]/(y_1^2, \ldots,y_n^2)$.

\end{theorem}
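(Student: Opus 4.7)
The plan is to combine Proposition \ref{prop:even} with a coefficient-by-coefficient analysis of the alternating sum. Fix $p \in \{0,1\}^n$ and let $N(p)$ denote the number of indices $i$ for which $x^{\alpha_i}$ divides $x^p$, equivalently $\alpha_i \leq p$ componentwise. By Lemma \ref{lemma:divide} this is exactly the number of $m_i$ counted in Proposition \ref{prop:even}, so $p \in V(f)$ if and only if $N(p)$ is even. It therefore suffices to show that the coefficient of $y^p$ in $h_0 - h_1 + \cdots + (-1)^s h_s$ equals $1$ precisely when $N(p)$ is even.

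Next I would analyse each $h_k$ termwise in the algebra $R = \mathbb{Q}[y_1,\ldots,y_n]/(y_1^2,\ldots,y_n^2)$. Every multigraded component of $R$ is at most one-dimensional and is indexed bijectively by $\{0,1\}^n$, so for any monomial ideal $J \subseteq R$ the coefficient of $y^p$ in $\HS(J)$ is $1$ when $y^p \in J$ and $0$ otherwise. The ideal whose Hilbert series is $h_k$ is generated by the monomials $\lcm(y^{\alpha_{i_1}}, \ldots, y^{\alpha_{i_k}})$ over all $k$-subsets $\{i_1 < \cdots < i_k\}$, and such a generator divides $y^p$ iff $\alpha_{i_j} \leq p$ for each $j$. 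Thus $y^p$ lies in the ideal iff at least $k$ of the $\alpha_i$ are $\leq p$, i.e. $N(p) \geq k$. With the convention $h_0 = \HS(R)$, the coefficient of $y^p$ in $h_k$ equals the Iverson bracket $[N(p) \geq k]$.

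Combining the two steps, the coefficient of $y^p$ in the alternating sum is $\sum_{k=0}^{s}(-1)^k[N(p) \geq k] = \sum_{k=0}^{N(p)}(-1)^k$, using that $N(p) \leq s$. This telescopes to $1$ when $N(p)$ is even and $0$ when $N(p)$ is odd, so applying $\Log_y$ to the alternating sum picks out exactly the $p$ with $N(p)$ even, which equals $V(f)$ by the first paragraph.

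The main obstacle is essentially bookkeeping: correctly interpreting the fine Hilbert series inside the quotient $R$, where the squarefree relations make every multigraded dimension $0$ or $1$, and then matching the parity condition from Proposition \ref{prop:even} against the telescoping identity $\sum_{k=0}^{N}(-1)^k = [N \text{ even}]$. Once these are in place the theorem follows without any appeal to Gr\"obner bases, Taylor resolutions, or inclusion-exclusion beyond this one-line identity.
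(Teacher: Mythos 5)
Your argument is correct and follows the same route as the paper's own proof: reduce to Proposition \ref{prop:even}, observe that the coefficient of $y^p$ in $h_k$ is the Iverson bracket $[N(p)\geq k]$ (because the multigraded pieces of $\mathbb{Q}[y]/(y_i^2)$ are zero- or one-dimensional and squarefree ideal membership is detected by lcm-divisibility), and then apply the telescoping identity $\sum_{k=0}^{N}(-1)^k=[N\text{ even}]$. You have merely spelled out the coefficient computation a bit more explicitly; there is no substantive difference from the paper's proof.
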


\begin{proof}
By Proposition \ref{prop:even}, it is enough to show that the monomial $y^{\beta}$ occurs with coefficient $1$ in the expression $h_0 - h_1 + h_2 - h_3 +  \cdots + (-1)^{s} h_s$ if the number of $x^{\alpha_i}$ that is divisible by $x^{\beta}$ is even and with coefficient $0$ otherwise.

Suppose that  the number of $x^{\alpha_i}$ that is divisible by $x^{\beta}$ is equal to $r$. Then $y^{\beta}$ is not contained as a summand in the expressions 
$h_{r+1}, \ldots, h_s$, but it occurs exactly once in each $h_i$, for $i = 0, \ldots, r$. It follows that the coefficient in front of $y^{\beta}$ in $h_0 - h_1 + h_2 - h_3 +  \cdots + (-1)^{s} h_s$ is zero if $r$ is odd and one otherwise.
\end{proof}

\begin{example}

Let $f = 1 + x_1x_5+x_2x_4+x_2x_4x_5$ be a boolean polynomial in the five variables $x_1, x_2, x_3, x_4, x_5$. Then 
$$h_1 = \HS((1,x_1x_5,x_2x_4,x_2x_4x_5)) =\HS((1)) =\sum_{p \in \mathbb{Z}_2^n} y^p.$$ 
Since 
\begin{align*}
(\lcm(1,x_1x_5), \lcm(1,x_2 x_4), \lcm(1,x_2 x_4 x_5),
 \lcm(x_1x_5, x_2x_4),\\
\lcm(x_1x_5,x_2 x_4 x_5),
 \lcm(x_2 x_4, x_2 x_4 x_5))
\nonumber
 = (x_1 x_5, x_2 x_4),
\end{align*}
it follows that
\begin{align*}
h_2 & = \HS((y_1 y_5, y_2 y_4))\\
 &=y_1 y_5 + y_1 y_2 y_5+ y_1 y_3 y_5+y_1 y_4 y_5+ y_1 y_2 y_3 y_5+ y_1 y_2 y_4 y_5+ y_1 y_3 y_4 y_5+\\
\nonumber
&y_1 y_2 y_3 y_4 y_5+ y_2 y_4 + y_1 y_2 y_4 + y_2 y_3 y_4 + y_2 y_4 y_5 + y_1 y_2 y_3 y_4 + y_2 y_3 y_4 y_5 .\\
\end{align*}
Similarly, $$h_3 = y_2 y_4 y_5 + y_1 y_2 y_4 y_5 + y_2 y_3 y_4 y_5 + y_1 y_2 y_3 y_4 y_5$$
and 
$$h_4 = y_1 y_2 y_4 y_5 + y_1 y_2 y_3 y_4 y_5,$$
which gives us 
\begin{align*}
&h_0 - h_1 + h_2 - h_3 + h_4 \\
= & y_1 y_5 + y_1 y_2 y_5+ y_1 y_3 y_5+y_1 y_4 y_5+ y_1 y_2 y_3 y_5+ y_1 y_2 y_4 y_5+ \\
\nonumber
&y_1 y_3 y_4 y_5+ y_1 y_2 y_3 y_4 y_5+y_2 y_4 + y_1 y_2 y_4 + y_2 y_3 y_4  + y_1 y_2 y_3 y_4 .\\
\end{align*}
Taking the logarithm of this polynomial, we obtain 
\begin{eqnarray*}
V(f) = \{(1,0,0,0,1), (1,1,0,0,1), (1,0,1,0,1), (1,0,0,1,1), (1,1,1,0,1), (1,1,0,1,1), \\
(1,0,1,1,1), (1,1,1,1,1), (0,1,0,1,0), (1,1,0,1,0), (0,1,1,1,0), (1,1,1,1,0)\},
\end{eqnarray*}
which concludes the example.

\end{example}

From the principle of inclusion and exclusion (see for instance \cite{froberg}), it follows that  

\begin{align} \label{eq:inklexkl}
\HS((y^{\alpha_1},\ldots, y^{\alpha_s})) = 
& \HS((y^{\alpha_1})) + \cdots + \HS((y^{\alpha_s}))
\nonumber
-(\HS((\lcm(y^{\alpha_1},y^{\alpha_2})))  \\ 
\nonumber
&+ \HS((\lcm(y^{\alpha_1},y^{\alpha_3})))
+ \cdots +  
\HS((\lcm(y^{\alpha_{s-1}}, y^{\alpha_s}))))\\
&+ \cdots + (-1)^s \HS(\lcm(y^{\alpha_1},\ldots, y^{\alpha_s})).
\end{align}
Hence the variety can be expressed as a linear combination of fine Hilbert series of principal ideals. 
Although it is possible to determine the coefficients combinatorially by combining Theorem \ref{thm:explicit1} and (\ref{eq:inklexkl}), we choose to use induction on $s$. For the induction step we need two lemmas.

\begin{lemma}\label{lemma:intersectformula}
Let $\{I_k\}$ and $\{J_h\}$ be finite families of ideals in 
$\mathbb{Q}[y_1,\ldots,y_n]/(y_1^2,\ldots, y_n^2)$. Let $f$ and $g$ be boolean 
polynomials in $\mathbb{Z}_2[x_1,\ldots,x_n]$. Suppose that
$$V(f) = \Log(\sum_k c_k\HS(I_k))$$ and that
$$V(g) =   \Log(\sum_h d_h\HS(J_h)).$$
Then 
$$V(f) \cap V(g) = \Log (\sum_{k,h} c_k d_h \HS(I_k \cap J_h)).$$ 
\end{lemma}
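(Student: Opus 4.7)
The plan is to prove the identity by comparing the coefficient of each monomial $y^\beta$ on the two sides, exploiting the fact that all ideals involved are monomial ideals and that intersection of monomial ideals is multiplicative at the level of monomial membership.

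First I would observe that, for any monomial ideal $I$ in $\mathbb{Q}[y_1,\ldots,y_n]/(y_1^2,\ldots,y_n^2)$, the fine Hilbert series has the explicit description $\HS(I) = \sum_{y^\beta \in I} y^\beta$; that is, the coefficient of $y^\beta$ in $\HS(I)$ is the indicator of whether $y^\beta\in I$. Since for monomial ideals $y^\beta \in I \cap J$ if and only if $y^\beta \in I$ and $y^\beta \in J$, this gives the pivotal identity
$$[y^\beta]\HS(I\cap J) \;=\; [y^\beta]\HS(I)\cdot[y^\beta]\HS(J),$$
where $[y^\beta](\cdot)$ denotes the coefficient functional.

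Next I would apply this identity termwise and use linearity of $[y^\beta](\cdot)$ to obtain
$$[y^\beta]\sum_{k,h} c_k d_h \HS(I_k\cap J_h) \;=\; \left([y^\beta]\sum_k c_k\HS(I_k)\right)\cdot\left([y^\beta]\sum_h d_h\HS(J_h)\right).$$
By hypothesis, the two factors on the right-hand side are exactly the coefficients whose $\Log$ returns $V(f)$ and $V(g)$ respectively, so each factor equals $1$ precisely when $\beta$ lies in the corresponding zero set. Therefore the product equals $1$ precisely when $\beta\in V(f)\cap V(g)$, and taking $\Log$ of the double sum recovers exactly this intersection.

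The main obstacle is a bookkeeping one rather than a conceptual one: I need to confirm that the hypotheses force the coefficients of $\sum_k c_k\HS(I_k)$ and of $\sum_h d_h\HS(J_h)$ to lie in $\{0,1\}$ (implicit in the fact that $\Log$ of each is a genuine \emph{set}), so that their pointwise product also lies in $\{0,1\}$ and $\Log$ of the combined double sum is well-defined and agrees with set intersection. Once that is acknowledged, the argument reduces to the one-line coefficient comparison above.
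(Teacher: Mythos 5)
Your proof is correct and follows essentially the same path as the paper's: both arguments hinge on the observation that the coefficient of $y^p$ in the fine Hilbert series of a monomial ideal is the indicator of membership, that intersection of monomial ideals corresponds to multiplying indicators, and that by bilinearity the coefficient of $y^p$ in $\sum_{k,h} c_k d_h \HS(I_k \cap J_h)$ factors as $\bigl(\sum_k c_k [y^p]\HS(I_k)\bigr)\bigl(\sum_h d_h [y^p]\HS(J_h)\bigr)$. The paper spells this out by explicitly listing the index sets of ideals containing $y^p$, while you phrase it via the coefficient functional $[y^\beta](\cdot)$; your remark that the hypotheses implicitly force each combined coefficient into $\{0,1\}$ is a fair reading of what the $\Log$ notation presupposes.
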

\begin{proof}
Let $p$ be any point in $\mathbb{Z}_2^n$. Let $i_1, \ldots, i_s$ be the set of indices such that 
$y^p \in I_{i_1},\ldots, y^p \in I_{i_s}$ and let similarly $j_1, \ldots, j_t$ be the set of indices such that 
$y^p \in J_{j_1},\ldots, y^p \in J_{j_t}$. 

It follows that
$y^p \in I_k \cap J_h$ if and only if  $I_k$ is one of $I_{i_1},\ldots, I_{i_s}$ and $J_h$ is one of $J_{j_1}, \ldots, J_{j_t}$. Hence the coefficient in front of 
 $y^p$ in $\sum_{k,h} c_k d_h \HS(I_k \cap J_h)$ is 
 $$c_{i_1} d_{j_1} + c_{i_1} d_{j_2} + \cdots + c_{i_1} d_{j_t} + \cdots + c_{i_s} d_{j_t} = (c_{i_1} + \cdots + c_{i_s}) \cdot (d_{j_1} + \cdots + d_{j_t}).$$
 
Now, if $p \in V(f)$, then $c_{i_1} + \cdots + c_{i_s} = 1$ and if  $p \notin V(f)$, then $c_{i_1} + \cdots + c_{i_s} = 0$. In the same way it holds that if 
$p \in V(g)$, then $d_{i_1} + \cdots + d_{i_t} = 1$ and if  $p \notin V(g)$, then $d_{j_1} + \cdots + d_{j_t} = 0$. Hence 

$$(c_{i_1} + \cdots + c_{i_s}) \cdot (d_{j_1} + \cdots + d_{j_t}) =
\begin{cases}
1 & \text{if } p \in V(f) \cap V(g) \\
0 & \text{otherwise.} \\
\end{cases}$$

Thus the logarithm of $(\sum_{k,h} c_k d_h \HS(I_k \cap J_h))$ is well defined and $V(f) \cap V(g) = \Log (\sum_{k,h} c_k d_h \HS(I_k \cap J_h))$.

\end{proof}

\begin{lemma} \label{lemma:hilbertcomplement}
Let $f$ be a boolean polynomial in $\mathbb{Z}_2[x_1,\ldots,x_n].$ 
Suppose that $V(f) = \Log(\sum_k c_k\HS(I_k))$. Then $$V(f)^c =  \Log(\HS((1)) - \sum_k c_k\HS(I_k)).$$

\end{lemma}

\begin{proof}
$\Log(\HS((1)) - \sum_k c_k\HS(I_k)) = \mathbb{Z}_2^n \setminus V(f) = V(f)^c.$
\end{proof}

\begin{theorem} \label{thm:explicit2}

Let $f = x^{\alpha_1} + \cdots + x^{\alpha_s}$ be a boolean polynomial in $\mathbb{Z}_2[x_1,\ldots,x_n]$. Then
$$V(f) = \Log(g_0 + \frac{1}{2}\sum_{1\leq i \leq s} (-2)^{i} g_i).$$
where $$g_0 = \HS((1)),$$ and for $i>0$, $$g_i = \sum_{1 \leq j_1 < \cdots < j_i \leq s} \HS(\lcm(y^{\alpha_{j_1}},\ldots,y^{\alpha_{j_i}}).$$

\end{theorem}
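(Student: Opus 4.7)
My plan is to follow the hint in the text and induct on $s$, driving the inductive step with Lemma \ref{lemma:rec} and Lemma \ref{lemma:intersectformula}. Throughout, I abbreviate the lcm generators by $L_T := \lcm(y^{\alpha_j} : j \in T)$ for $T \subseteq \{1,\ldots,s\}$ (with $L_\emptyset = 1$), so that $g_i = \sum_{|T|=i} \HS((L_T))$. The base case $s=1$ is a direct check: the formula reads $V(x^{\alpha_1}) = \Log(\HS((1)) - \HS((y^{\alpha_1})))$, and the right-hand side is the characteristic polynomial of those $p$ with $y^{\alpha_1} \nmid y^p$, which by Lemma \ref{lemma:divide} is precisely $V(x^{\alpha_1})$.

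For the inductive step, I set $f' = x^{\alpha_1} + \cdots + x^{\alpha_{s-1}}$ and write $W_{s-1}$ for the expression of the theorem applied to $f'$; inductively $W_{s-1} = \sum_{p \in V(f')} y^p$ as a formal polynomial. Hence $V(f')^c$ is represented by $\HS((1)) - W_{s-1}$ and $V(x^{\alpha_s})^c$ by $\HS((y^{\alpha_s}))$. Lemma \ref{lemma:rec} decomposes $V(f)$ as the disjoint union $(V(f') \cap V(x^{\alpha_s})) \sqcup (V(f')^c \cap V(x^{\alpha_s})^c)$, and two applications of Lemma \ref{lemma:intersectformula} collapse this to
\[ W_{s-1} \cdot (\HS((1)) - \HS((y^{\alpha_s}))) + (\HS((1)) - W_{s-1}) \cdot \HS((y^{\alpha_s})) = W_{s-1} - 2\,W_{s-1}\cdot \HS((y^{\alpha_s})) + \HS((y^{\alpha_s})), \]
where $\cdot$ denotes the bilinear operation $\HS(I)\cdot \HS(J) := \HS(I \cap J)$; on principal monomial ideals this is simply $\HS((L_A)) \cdot \HS((L_B)) = \HS((L_{A \cup B}))$.

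To conclude, I match coefficients of $\HS((L_T))$ on both sides against the target $W_s = g_0 + \tfrac12 \sum_{i=1}^s (-2)^i g_i$. Splitting into cases (i) $s \notin T$, where only $W_{s-1}$ contributes and its coefficient ($1$ if $T = \emptyset$ and $\tfrac{(-2)^{|T|}}{2}$ otherwise) passes through unchanged; (ii) $T = \{s\}$, where $-2\,W_{s-1}\cdot \HS((y^{\alpha_s}))$ contributes $-2$ and the lone $+\HS((y^{\alpha_s}))$ contributes $+1$, summing to $-1 = \tfrac{(-2)^1}{2}$; and (iii) $s \in T$ with $|T| \geq 2$, where only $-2\,W_{s-1}\cdot \HS((y^{\alpha_s}))$ contributes, yielding $-(-2)^{|T|-1} = \tfrac{(-2)^{|T|}}{2}$, the identity falls out. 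The main obstacle is case (ii): the exceptional coefficient $c_\emptyset = 1$ in $W_{s-1}$ does not obey the generic rule $\tfrac{(-2)^0}{2} = \tfrac12$, and verifying that the isolated summand $+\HS((y^{\alpha_s}))$ exactly corrects the resulting mismatch is the most delicate bookkeeping in the proof.
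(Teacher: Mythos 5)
Your proof is correct and follows essentially the same route as the paper's: induction on $s$, the disjoint decomposition from Lemma \ref{lemma:rec}, and Lemma \ref{lemma:intersectformula} to convert the two intersections into products of Hilbert-series expressions. The coefficient-matching by subsets $T$ (with the three cases $s\notin T$, $T=\{s\}$, $s\in T$ with $|T|\geq 2$) is a cleaner way of organizing the same algebraic recombination that the paper carries out by expanding and collecting terms directly, and your observation about the exceptional $c_\emptyset = 1$ being corrected by the isolated $+\HS((y^{\alpha_s}))$ is exactly the point where the paper's expansion also requires the most care.
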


\begin{proof}

Induction over $s$. The base case follows since 
$$V(x^{\alpha_1}) = 
\Log (\HS((1)) - \HS((y^{\alpha_1})))$$
by Theorem \ref{thm:explicit1}.

Assume that the theorem holds true for $s = t$. 
By the induction assumption, it holds that 
\begin{align*}
&V(x^{\alpha_1} + \cdots + x^{\alpha_t}) = 
 \Log(\HS((1))  -2^0 \sum_{1 \leq i \leq s} \HS(y^{\alpha_i}) \\  
 &+ 2^1 \sum_{1\leq i < j \leq s} \HS(\lcm(y^{\alpha_i},y^{\alpha_j})) +  \cdots + (-1)^s2^{s-1} \HS(\lcm(y^{\alpha_1},\ldots, y^{\alpha_t}))).
\end{align*}
Another use of Theorem \ref{thm:explicit1} gives us $$V(x^{\alpha_{t+1}}) = \Log (\HS((1)) - \HS(y^{\alpha_{t+1}}))$$
so we can apply Lemma \ref{lemma:intersectformula} to obtain  
\begin{align*}
&V(x^{\alpha_1} + \cdots + x^{\alpha_t}) \cap V(x^{\alpha_{t+1}}) = \\
&\Log(\HS((1)) -2^0 \sum_{1 \leq i \leq s} \HS(y^{\alpha_i})  + 
2^1 \sum_{1\leq i < j \leq s} \HS(\lcm(y^{\alpha_i},y^{\alpha_j})) \\
& +  \cdots + (-1)^s2^{s-1} \HS(\lcm(y^{\alpha_1},\ldots, y^{\alpha_t}))\\
&-(\HS((y^{\alpha_{t+1}})) - 2^0 \sum_{1 \leq i \leq s} \HS(y^{\alpha_i},y^{\alpha_{t+1}}) + 2^1  \sum_{1\leq i < j \leq s} \HS(\lcm(y^{\alpha_i},y^{\alpha_j},y^{\alpha_{t+1}})) \\
&+ \cdots + (-1)^s2^{s-1} \HS(\lcm(y^{\alpha_1},\ldots, y^{\alpha_t}, y^{\alpha_{t+1}})).\\
\end{align*}
By Lemma \ref{lemma:hilbertcomplement} and the induction assumption, we have
\begin{align*}
V(x^{\alpha_1} + \cdots + x^{\alpha_t})^c&
= \Log(2^0 \sum_{1 \leq i \leq s} \HS(y^{\alpha_i})  -
2^1 \sum_{1\leq i < j \leq s} \HS(\lcm(y^{\alpha_i},y^{\alpha_j}))\\
&  +  \cdots + (-1)^{s-1} 2^{s-1} \HS(\lcm(y^{\alpha_1},\ldots, y^{\alpha_t})).
\end{align*}
By Lemma \ref{lemma:hilbertcomplement} and Theorem \ref{thm:explicit1}, it follows that
$$
V(x^{\alpha_{t+1}})^c = \Log (\HS(y^{\alpha_{t+1}})),
$$
so we can use Lemma  \ref{lemma:intersectformula} once more to obtain 
\begin{align*}
&V(x^{\alpha_1} + \cdots + x^{\alpha_t})^c \cap V(x^{\alpha_{t+1}})^c 
= 2^0 \sum_{1 \leq i \leq s} \HS(y^{\alpha_i},y^{\alpha_{t+1}}) \\
&- 2^1  \sum_{1\leq i < j \leq s} \HS(\lcm(y^{\alpha_i},y^{\alpha_j},y^{\alpha_{t+1}}))
+ \cdots + (-1)^{s-1} 2^{s-1} \HS(\lcm(y^{\alpha_1},\ldots, y^{\alpha_t}, y^{\alpha_{t+1}})).\\
\end{align*}
We are finally ready to use the identity 
\begin{align*}
V(x^{\alpha_1}+\cdots + x^{\alpha_{t+1}}) = V(x^{\alpha_1} + \cdots + x^{\alpha_t}) \cap V(x^{\alpha_{t+1}}) \\
  \cup V(x^{\alpha_1} + \cdots + x^{\alpha_t})^c \cap V(x^{\alpha_{t+1}})^c
\end{align*}
derived in Lemma  \ref{lemma:rec}. 
Indeed, 
\begin{align*}
&V(x^{\alpha_1} + \cdots + x^{\alpha_t}) \cap V(x^{\alpha_{t+1}}) \cup V(x^{\alpha_1} + \cdots + x^{\alpha_t})^c \cap V(x^{\alpha_{t+1}})^c\\
&= 
\Log(\HS((1)) -2^0 \sum_{1 \leq i \leq s} \HS((y^{\alpha_i}))  + 
2^1 \sum_{1\leq i < j \leq s} \HS((\lcm(y^{\alpha_i},y^{\alpha_j}))) \\
& +  \cdots + (-1)^s2^{s-1} \HS((\lcm(y^{\alpha_1},\ldots, y^{\alpha_t})))\\
&-(\HS((y^{\alpha_{t+1}})) - 2^0 \sum_{1 \leq i \leq s} \HS((y^{\alpha_i},y^{\alpha_{t+1}})) + 2^1  \sum_{1\leq i < j \leq s} \HS((\lcm(y^{\alpha_i},y^{\alpha_j},y^{\alpha_{t+1}}))) \\
&+ \cdots + (-1)^s2^{s-1} \HS((\lcm(y^{\alpha_1},\ldots, y^{\alpha_t}, y^{\alpha_{t+1}}))))\\
&+
2^0 \sum_{1 \leq i \leq s} \HS(y^{\alpha_i},y^{\alpha_{t+1}}) - 2^1  \sum_{1\leq i < j \leq s} \HS((\lcm(y^{\alpha_i},y^{\alpha_j},y^{\alpha_{t+1}}))) \\
&+ \cdots + (-1)^{s-1} 2^{s-1} \HS((\lcm(y^{\alpha_1},\ldots, y^{\alpha_t}, y^{\alpha_{t+1}}))))\\
&= \Log(\HS((1)) - 2^0 \sum_{1 \leq i \leq s+1} \HS((y^{\alpha_i}))  + 
2^1 \sum_{1\leq i < j \leq s+1} \HS((\lcm(y^{\alpha_i},y^{\alpha_j}))) \\
& +  \cdots + (-1)^{s+1}2^{s} \HS((\lcm(y^{\alpha_1},\ldots, y^{\alpha_{t+1}})))).
\end{align*}
which concludes the proof.

\end{proof}

\begin{corollary} \label{cor:main}
Let $I$ be a boolean ideal in $\mathbb{Z}_2[x_1,\ldots,x_n]$ with defining polynomial $x^{\alpha_1} + \cdots + x^{\alpha_s}$. Then
$$|V(I)| =  2^n + d_1 + d_2 +  \cdots + d_s,$$
where $$d_i = \frac{1}{2}(-2)^{i}  \sum_{1 \leq j_1 < \cdots < j_i \leq s} 2^{n-\deg(\lcm(x^{\alpha{_{j_1}}},\ldots,x^{\alpha_{j_i}}))}.$$
\end{corollary}
\begin{proof}
The corollary follows from Theorem \ref{thm:explicit2} since the number of monomials in $\HS(\lcm(y^{\alpha_{j_1}},\ldots,y^{\alpha_{j_i}}))$ is $2^{n-\deg(\lcm(x^{\alpha{_{j_1}}},\ldots,x^{\alpha_{j_i}}))}.$
\end{proof}

The next corollary is a direct consequence of Theorem \ref{thm:explicit2}, Corollary \ref{cor:main} and the one-to-one correspondences in Lemma \ref{lemma:one-to-one-correspondences}.

\begin{corollary}\label{cor:sys2}

Let $I = (x_1^2 - x_1, \ldots, x_n^2 - x_n,f_1,\ldots, f_s)$ be an ideal in 
$\mathbb{Z}_2[x_1,\ldots,x_n]$. Let $x^{\alpha_1} + \cdots + x^{\alpha_s}$ be the boolean polynomial such that $x^{\alpha_1} + \cdots + x^{\alpha_s} = (f_1 + 1) \cdots (f_s + 1) + 1$  as boolean polynomials.
Then 
$$V(I) = \Log(g_0 + \frac{1}{2}\sum_{1\leq i \leq s} (-2)^{i} g_i),$$
and
$$|V(I)| =  2^n + d_1 + d_2 +  \cdots + d_s,$$
where $$g_0 = \HS((1)),$$ 
$$g_i = \sum_{1 \leq j_1 < \cdots < j_i \leq s} \HS(\lcm(y^{\alpha_{j_1}},\ldots,y^{\alpha_{j_i}}) \text{ when $i>0$.}$$
and  $$d_i = \frac{1}{2}(-2)^{i}  \sum_{1 \leq j_1 < \cdots < j_i \leq s} 2^{n-\deg(\lcm(x^{\alpha{_{j_1}}},\ldots,x^{\alpha_{j_i}}))}.
$$

\end{corollary}

The next corollary to Theorem \ref{thm:explicit2} requires a short proof.

\begin{corollary} \label{cor:homo}
Let $f = x^{\alpha_1} + \cdots + x^{\alpha_s}$ be a boolean polynomial in $\mathbb{Z}_2[x_1,\ldots,x_n]$ and let $\pi$ be the natural homomorphism from 
$\mathbb{Z}[y_1,\ldots,y_n]/(y_1^2, \ldots, y_n^2)$ to $\mathbb{Z}_2[y_1,\ldots,y_n]/(y_1^2, \ldots, y_n^2)$. 
Then 
$$V(f) = \Log(\pi(\HS((1)) - (\HS(y^{\alpha_1}) + \cdots + \HS(y^{\alpha_s})))).$$

\end{corollary}

\begin{proof}
By Theorem \ref{thm:explicit2}, $$V(f) = \Log(g_0 + \frac{1}{2}\sum_{1\leq i \leq s} (-2)^{i} g_i),$$ with the $g_i$'s being defined in the referred theorem.
When $i > 1$, the coefficient in front of each $g_i$ is a multiple of $2$, and thus
a monomial has coefficient $1$ in 
$$g_0 + \frac{1}{2}\sum_{1\leq i \leq s} (-2)^{i} g_i$$
if and only if it has coefficient $1$ in  
$$\pi(g_0 - g_1).$$ 
Thus $$\Log(g_0 + \frac{1}{2}\sum_{1\leq i \leq s} (-2)^{i} g_i) = 
 \Log(\pi((\HS((1)) - (\HS(y^{\alpha_1}) + \cdots + \HS(y^{\alpha_s})))).$$
\end{proof}

Before we continue we need to introduce more notation. 
Let $X$ be a finite collection of points in some space. A function $S_p$ from $X$ to $\{0,1\}$ such that $S_p(p) = 1$ and $S_p(q) = 0$ when $p \neq q$ is called a separator of $p$ with respect to $X$.

\begin{lemma} \label{lemma:sep}
Let $p$ be a point in $\mathbb{Z}_2^n$. Then
$$S_p = x^{\Log \HS  (y^p)}$$ is a separator of $p$ with respect to $\mathbb{Z}_2^n$.
\end{lemma}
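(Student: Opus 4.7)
The plan is to unfold both sides of the defining equation $S_p = x^{\Log \HS((y^p))}$ into something evaluable, and then use Lemma~\ref{lemma:divide} to compute the values of $S_p$ at an arbitrary point of $\mathbb{Z}_2^n$.

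First I would identify what the right-hand side is as a boolean polynomial. In the algebra $\mathbb{Q}[y_1,\ldots,y_n]/(y_1^2,\ldots,y_n^2)$ every $\mathbb{N}^n$-graded piece is at most $1$-dimensional, so the fine Hilbert series $\HS((y^p))$ has coefficients in $\{0,1\}$ and is literally the formal sum $\sum_{q \geq p} y^q$, where the sum is over those $q \in \{0,1\}^n$ such that $p_i = 1 \Rightarrow q_i = 1$ (equivalently $y^p \mid y^q$). Applying $\Log_y$ gives the set $\{q \in \{0,1\}^n : q \geq p\}$, and hence by the convention $x^{\{q_1,\ldots,q_k\}} = x^{q_1}+\cdots+x^{q_k}$ we obtain
\[
S_p \;=\; \sum_{q \geq p} x^q \qquad \text{as a boolean polynomial.}
\]

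Next I would compute $S_p(r)$ for an arbitrary $r \in \mathbb{Z}_2^n$ using Lemma~\ref{lemma:divide}, which tells us that $x^q(r) = 1$ precisely when $q \leq r$ componentwise. Therefore
\[
S_p(r) \;=\; \bigl|\{q \in \{0,1\}^n : p \leq q \leq r\}\bigr| \pmod{2}.
\]

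The main observation is then the following trichotomy. If $p \not\leq r$, the set is empty and $S_p(r) = 0$. If $p \leq r$ and $k := |\{i : p_i = 0,\, r_i = 1\}|$, the set in question is the Boolean lattice on these $k$ free coordinates and has $2^k$ elements, so $S_p(r) \equiv 2^k \pmod 2$; this is $1$ exactly when $k = 0$, i.e. when $r = p$, and $0$ otherwise. Combining the cases yields $S_p(p) = 1$ and $S_p(r) = 0$ for $r \neq p$, which is precisely the separator property. I do not anticipate any real obstacle; the only thing to take care of is the parity bookkeeping, which is clean because the size of each nonempty fibre is a power of two.
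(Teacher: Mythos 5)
Your proposal is correct and follows essentially the same approach as the paper: unfold $S_p$ as the sum of all square-free monomial multiples of $x^p$, evaluate at an arbitrary point via Lemma~\ref{lemma:divide}, and observe that the number of contributing monomials is a power of two which equals $1$ exactly when the evaluation point is $p$ itself. The paper phrases the counting in terms of submonomials $m'$ of the cofactor $m = x^q/x^p$ rather than the interval $\{q : p \leq q \leq r\}$ in the Boolean lattice, but the two are the same bookkeeping.
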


\begin{proof}
The only term in $S_p$ that divides $x^p$ is $x^p$ itself. Hence $x^p(p) = 1$, while $x^q(p) = 0$ for all the remaining monomials $x^q$ in $S_p$ by Lemma \ref{lemma:divide}, so 
$S_p(p) = 1.$

Let $q \neq p$. If $x^q \notin (x^p)$, then no elements in $(x^p)$ divides $x^q$, so 
$S_p(q) = 0$ in this case. Suppose instead that $x^q \in (x^p)$. 
Then $x^q = x^p \cdot m$ for some $m \neq 1$. It follows that 
$x^p \cdot m' | x^q$, for all submonomials $m'$ of $m$. There are $2^{\deg(m)}$ submonomials of $m$ and hence there are $2^{\deg(m)}$ monomials in $(x^p)$ which evaluate to 
one in the point $q$. The remaining monomials evaluate to zero. Since $m \neq 1$, $2^{\deg(m)}$ is an even integer, so $S_p(q) = 0$ and the proof of the lemma is complete.

\end{proof}

\begin{remark} \label{remark:sep}
Notice that $V(S_p +1) = \{p\}$.
\end{remark}

\begin{lemma} \label{lemma:vanishing}
Let $f=x^{\alpha_1}+ \cdots + x^{\alpha_s}$ be a boolean polynomial in $\mathbb{Z}_2[x_1,\ldots,x_n]$. Then

$$V(x^{V(f)}) = \begin{cases}
  \Log(f)^c \cup \{(0,\ldots,0)\} & \text{if } 1 \in \supp(f)  \\
   \Log(f)^c \setminus \{(0,\ldots,0)\} & \text{if } 1 \notin \supp(f).
\end{cases}$$
\end{lemma}

\begin{proof}

By Corollary \ref{cor:homo},
$V(f) = \Log(\pi (g_0 - g_1)) = \Log(\pi(g_0) + \pi(g_1)).$
Thus $x^{V(f)} = x^{\Log(\pi(g_0) + \pi(g_1))}.$

Suppose that $1 \in \supp(f)$. Assume, with no loss of generality, that  $x^{\alpha_1} = 1$. Then 
$\HS((y^{\alpha_1})) = \HS((1))$ and $\pi(g_0) + \pi(g_1) = \pi(\HS(y^{\alpha_2})) + \cdots + \pi(\HS(y^{\alpha_s})).$ 
By Lemma \ref{lemma:sep}, $x^{\Log(\HS(y^{\alpha_i}))}$ is a separator of $\alpha_i$.  Thus 
$$x^{\Log( \pi(g_0) + \pi(g_1))}(p) = 
\begin{cases}
1 & \text{ if } p \in \{\alpha_2, \ldots, \alpha_s\}\\
0 & \text {otherwise}.
\end{cases}
$$

Suppose instead that $1 \notin \supp(f)$. Then 
$\pi(g_0) + \pi(g_1) = \pi(\HS((1))) +\pi(\HS((y^{\alpha_1})))  + \cdots + \pi(\HS((y^{\alpha_s}))).$ With a similar argument as in the first part, we get that 
$$x^{\Log(\pi(g_0) + \pi(g_1))}(p) = 
\begin{cases}
1 & \text{ if } p \in \{\alpha_1, \alpha_2, \ldots, \alpha_s, (0,\ldots,0)\}\\
0 & \text {otherwise}.
\end{cases}
$$

\end{proof}

\begin{lemma} \label{lemma:phi}
Let $f$ be a boolean polynomial. Then 
$$V(f + \sum_{p \in \mathbb{Z}_2^n} x^p + 1) = 
\begin{cases}
V(f)^c \setminus \{(0,\ldots,0)\} & \text{ if } 1 \in \supp(f)\\
V(f)^c \cup \{(0,\ldots,0)\} & \text{ if } 1 \notin \supp(f)\\
\end{cases}
$$
\end{lemma}
\begin{proof}
We have $V(f + 1) = V(f)^c$ by Lemma \ref{lemma:complement}. By Lemma \ref{lemma:sep}, $\sum_{p \in \mathbb{Z}_2^n} x^p $ is a separator of $(0,\ldots,0)$, so by Remark \ref{remark:sep}, $V(\sum_{p \in \mathbb{Z}_2^n} x^p + 1) = \{(0,\ldots,0)\}$. It follows from Lemma  \ref{lemma:rec} that $V(f  + \sum_{p \in \mathbb{Z}_2^n} x^p + 1) = 
(V(f) \cap \{(0,\ldots,0)\}) \cup (V(f)^c \cap (\mathbb{Z}_2^n \setminus \{(0,\ldots,0)\})).$

If $1 \in \supp(f)$, then $(0,\ldots,0) \in V(f)^c$ and $(0,\ldots,0) \notin V(f)$, so 
$V(f + \sum_{p \in \mathbb{Z}_2^n} x^p + 1) = V(f)^c \setminus \{(0,\ldots,0)\}$.

If $1 \notin \supp(f)$, then $(0,\ldots,0) \notin V(f)^c$ and $(0,\ldots,0) \in V(f)$, so 
$V(f + \sum_{p \in \mathbb{Z}_2^n} x^p + 1) = \{(0,\ldots,0 \} \cup V(f)^c. $
\end{proof}

We have used the variety $V$ as a map from the set of boolean ideals to the power set of $\mathbb{Z}_n^2$, but also from the set of boolean polynomials to the power set of $\mathbb{Z}_n^2$.
For the next theorem we need to introduce a map from $\mathbb{Z}_n^2$ to the set of boolean polynomials, by sending $V(f)$ to $f$. We denote this bijective map by $V^{-1}.$

\begin{theorem} \label{thm:invalg}

Let $f$ be a boolean polynomial. Then
$$V(f) = \Log( V^{-1}(\Log(f)) + \sum_{p \in \mathbb{Z}_2^n} x^p + 1).$$
\end{theorem}
\begin{proof}
By taking the exponent followed by the variety $V$ on both sides we get the equivalent statement

$$V(x^{V(f)}) =  V(V^{-1}(\Log(f)) + \sum_{p \in \mathbb{Z}_2^n} x^p + 1).$$
We will now rewrite the right hand side and show that it agrees with the left hand side.
Let $g = V^{-1}(\Log(f)).$ Then $1 \in \supp(f)$ if and only if $1 \notin \supp(g)$. Applying Lemma \ref{lemma:phi} on
the right hand side we get
$$V(g+\sum_{p \in \mathbb{Z}_2^n} x^p + 1) =  \begin{cases}
V(g)^c \setminus \{(0,\ldots,0)\} & \text{ if } 1 \notin \supp(f)\\
V(g)^c \cup \{(0,\ldots,0)\} & \text{ if } 1 \in \supp(f).
\end{cases}
$$
Since $V(g)^c = \Log(f)^c$, this expression is equal to $V(x^{V(f)})$ by Lemma \ref{lemma:vanishing}.

\end{proof}

Theorem \ref{thm:invalg} gives an "inverse" way to compute the variety of a boolean polynomial. Indeed, to determine the variety of a boolean polynomial $f$, we can first take the logarithm of this polynomial, which is the zero set of some polynomial $g$. The logarithm of the sum $g + \sum_{p \in \mathbb{Z}_2^n} x^p + 1$ is then equal to the variety of $f$. 

\begin{example}
Let $$f = 1 + x_2 + x_3 + x_1 x_2 + x_1 x_3 + x_2 x_3 + x_1 x_2 x_3$$ be a boolean polynomial in three variables $x_1,x_2$ and $x_3.$
We have 
$V^{-1}(\Log(f)) = x_1 + x_1 x_2 + x_1 x_3 + x_1 x_2 x_3,$ and thus 
$V(f) =  \Log(x_2 + x_3 + x_2 x_3) = \{(0,1,0), (0,0,1), (0,1,1)\}$ by Theorem \ref{thm:invalg}.

\end{example}

Let $\phi$ be the map from the set of boolean polynomials in $n$ variables to itself defined by
$\phi(f)=x^{V(f)}$. 
It is a natural question to examine the consequences of the iterated use of $\phi$. As Theorem \ref{thm:fquad} will show, the fourth power of $\phi$ is the identity, which is a surprising result.

\begin{lemma} \label{lemma:fsquare}
Let $f$ be a boolean polynomial. Then
$$\phi^2(f) = f + \sum_{p \in \mathbb{Z}_2^n} x^p + 1.$$
\end{lemma}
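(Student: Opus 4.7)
The plan is to unravel the definition $\phi(g) = x^{V(g)}$ twice and apply Theorem~\ref{thm:vanishing}. Writing $f = x^{\alpha_1} + \cdots + x^{\alpha_s}$ with distinct boolean monomials and setting $A = \Log(f) = \{\alpha_1,\ldots,\alpha_s\} \subset \mathbb{Z}_2^n$, we have
\[
\phi^2(f) = x^{V(x^{V(f)})} = \sum_{p \in V(x^{V(f)})} x^p,
\]
and Theorem~\ref{thm:vanishing} is exactly the tool that describes the inner zero set $V(x^{V(f)})$ in terms of $A$.

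I would then split into the two cases provided by that theorem. If $1$ is a term of $f$, i.e.\ $(0,\ldots,0) \in A$, then $V(x^{V(f)}) = A^c \cup \{(0,\ldots,0)\}$ (a disjoint union, since $0 \notin A^c$), so $\phi^2(f) = \sum_{p \in A^c} x^p + 1$. If $1$ is not a term of $f$, then $V(x^{V(f)}) = A^c \setminus \{(0,\ldots,0)\}$, so $\phi^2(f) = \sum_{p \in A^c} x^p - 1$, which equals $\sum_{p \in A^c} x^p + 1$ in characteristic~$2$. Both cases therefore collapse to the single identity $\phi^2(f) = \sum_{p \in A^c} x^p + 1$.

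Finally, since $\sum_{p \in \mathbb{Z}_2^n} x^p = \sum_{p \in A} x^p + \sum_{p \in A^c} x^p = f + \sum_{p \in A^c} x^p$, we get $\sum_{p \in A^c} x^p = f + \sum_{p \in \mathbb{Z}_2^n} x^p$, which plugs in to give the stated formula $\phi^2(f) = f + \sum_{p \in \mathbb{Z}_2^n} x^p + 1$. The only loose end is the degenerate case $f = 0$, not directly covered by Theorem~\ref{thm:vanishing}; here one checks by hand using Lemma~\ref{lemma:divide} that $\phi(0) = \sum_{p \in \mathbb{Z}_2^n} x^p$ vanishes at every $p \neq (0,\ldots,0)$ and equals $1$ at $(0,\ldots,0)$, so $\phi^2(0) = \sum_{p \neq 0} x^p = \sum_{p \in \mathbb{Z}_2^n} x^p + 1$, matching the formula. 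There is no real obstacle beyond the case split; the characteristic-$2$ arithmetic makes the two branches of Theorem~\ref{thm:vanishing} coalesce automatically.
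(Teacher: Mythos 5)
Your proof is correct and follows the same route as the paper: apply Theorem~\ref{thm:vanishing} to compute $V(x^{V(f)})$, split on whether $1 \in \supp(f)$, and exponentiate to see that both branches yield $\sum_{p \in \mathbb{Z}_2^n} x^p + f + 1$. Your explicit handling of the degenerate case $f = 0$ (which falls outside the hypotheses of Theorem~\ref{thm:vanishing}) is a small but worthwhile addition that the paper leaves implicit.
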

\begin{proof}
It clearly holds that $x^{\Log(f)^c} = f + \sum_{p \in \mathbb{Z}_2^n} x^p$. Thus, if $1 \in \supp{f}$, then 
$x^{\Log(f)^c \cup \{(0,\ldots,0)\}} = f + \sum_{p \in \mathbb{Z}_2^n} x^p+1$, and if $1 \notin \supp{f}$, then
$x^{\Log(f)^c \setminus \{(0,\ldots,0)\}} = f + \sum_{p \in \mathbb{Z}_2^n} x^p+1$

The lemma now follows by taking the exponent on both sides in Lemma \ref{lemma:vanishing}.
\end{proof}

\begin{theorem} \label{thm:fquad}
Let $\phi$ be the map from the set of boolean polynomials in $n$ variables into itself defined by
$\phi(f)=x^{V(f)}$. Then
$\phi^4$ is the identity. 
\end{theorem}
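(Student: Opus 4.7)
The plan is to read off the conclusion directly from Lemma \ref{lemma:fsquare}, exploiting the fact that we are working in characteristic two. Set $S := \sum_{p \in \mathbb{Z}_2^n} x^p + 1$, so that Lemma \ref{lemma:fsquare} reads $\phi^2(f) = f + S$ for every boolean polynomial $f$.

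First I would note that Lemma \ref{lemma:fsquare} applies uniformly to every boolean polynomial, not only to the one under consideration. In particular, since $\phi^2(f)$ is itself a boolean polynomial, I may substitute $\phi^2(f)$ in place of $f$ in the identity, obtaining
\[
\phi^4(f) = \phi^2\bigl(\phi^2(f)\bigr) = \phi^2(f) + S = (f + S) + S = f + 2S.
\]
Because the coefficients live in $\mathbb{Z}_2$, we have $2S = 0$ as a boolean polynomial, and therefore $\phi^4(f) = f$ for every $f$.

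I do not anticipate a genuine obstacle here: the only substantive content has already been placed in Lemma \ref{lemma:fsquare}, and the theorem is essentially an algebraic consequence of the fact that $\phi^2$ acts as translation by a fixed element of an $\mathbb{F}_2$-vector space. The one thing I would be careful to mention explicitly is that $S$ does not depend on $f$, so that the two invocations of Lemma \ref{lemma:fsquare} contribute the same summand $S$, which then cancels with itself in characteristic two.
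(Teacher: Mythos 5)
Your proof is correct and takes essentially the same route as the paper: apply Lemma \ref{lemma:fsquare} twice and observe that the fixed summand $S$ cancels with itself in characteristic two. Your presentation is marginally cleaner in that you name the constant and appeal explicitly to $2S=0$, but the substance is identical.
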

\begin{proof}
Let $f$ be a boolean polynomial. Using Lemma \ref{lemma:fsquare}, we get
$$\phi^4(f) = \phi^2 (\phi^2 (f)) = \phi^2 ( \sum_{p \in \mathbb{Z}_2^n} x^p + f + 1) = 
\sum_{p \in \mathbb{Z}_2^n} x^p+ (\sum_{p \in \mathbb{Z}_2^n} x^p + f + 1) + 1 = f.
 $$
\end{proof}

We will end up by giving a result on the number of elements in the variety of a boolean polynomial.
\begin{proposition} \label{prop:odd}
Let $f$ be a boolean polynomial. Then $|V(f)|$ is odd if and only if 
$x_1 \cdots x_n \in \supp(f)$.
\end{proposition}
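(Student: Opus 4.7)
The plan is to count $|V(f)|$ modulo $2$ by using the indicator $f(p)+1 \in \{0,1\}$, which equals $1$ precisely when $p \in V(f)$. Summing over all points of $\mathbb{Z}_2^n$ (where arithmetic is in $\mathbb{Z}$, not in $\mathbb{Z}_2$):
\[
|V(f)| \;=\; \sum_{p \in \mathbb{Z}_2^n}\bigl(f(p)+1\bigr) \;\equiv\; 2^n + \sum_{p \in \mathbb{Z}_2^n} f(p) \pmod 2.
\]
Assuming $n\geq 1$, the term $2^n$ is even, so $|V(f)|$ has the same parity as $\sum_{p} f(p)$.

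Next I would expand $f = x^{\alpha_1}+\cdots+x^{\alpha_s}$ and swap the order of summation:
\[
\sum_{p \in \mathbb{Z}_2^n} f(p) \;=\; \sum_{i=1}^{s}\,\sum_{p \in \mathbb{Z}_2^n} x^{\alpha_i}(p).
\]
By Lemma \ref{lemma:divide}, $x^{\alpha_i}(p)=1$ iff $x^{\alpha_i} \mid x^p$, which happens precisely when the $1$-entries of $\alpha_i$ are contained in those of $p$; the free coordinates are the remaining $n-\deg(x^{\alpha_i})$ of them, so
\[
\sum_{p \in \mathbb{Z}_2^n} x^{\alpha_i}(p) \;=\; 2^{\,n-\deg(x^{\alpha_i})}.
\]

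Now the parity analysis is immediate: $2^{n-\deg(x^{\alpha_i})}$ is odd iff $\deg(x^{\alpha_i})=n$, which singles out the unique top monomial $x^{\alpha_i}=x_1\cdots x_n$. Hence $\sum_p f(p)$ is odd iff exactly one of the $x^{\alpha_i}$ equals $x_1\cdots x_n$, i.e.\ iff $x_1\cdots x_n \in \supp(f)$. Combining this with the first displayed congruence yields the claim. There is no serious obstacle; the only care needed is to note that in writing $|V(f)| \equiv \sum_p f(p) \pmod 2$ we used $n\geq 1$, which is the standing assumption on the ambient polynomial ring.
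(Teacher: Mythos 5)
Your proof is correct, but it takes a genuinely different route from the paper. The paper's argument uses the separator machinery: it writes $f = 1 + S_{p_1} + \cdots + S_{p_s}$ where $V(f) = \{p_1,\ldots,p_s\}$ and the $S_{p_i}$ are the separators from Lemma~\ref{lemma:sep}, then observes that each separator contains the top monomial $x_1\cdots x_n$ in its support, so the coefficient of $x_1\cdots x_n$ in $f$ equals $s \bmod 2$. Your argument instead counts $|V(f)|$ modulo $2$ directly, monomial by monomial, using only Lemma~\ref{lemma:divide} and the elementary fact that $\sum_{p\in\mathbb{Z}_2^n} x^{\alpha}(p) = 2^{\,n-\deg x^{\alpha}}$, which isolates $x_1\cdots x_n$ as the unique monomial with odd count. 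Your route is more self-contained and elementary --- it does not rely on the Hilbert-series/separator apparatus built up in Section~\ref{sec:zeroset} --- whereas the paper's proof is shorter given that machinery and makes the connection to separators explicit. One small notational caveat: your intermediate equality $|V(f)| = \sum_p (f(p)+1)$ mixes $\mathbb{Z}_2$-evaluation with $\mathbb{Z}$-summation (as integers the indicator is $1 - \tilde f(p)$, not $1 + \tilde f(p)$), but the subsequent congruence modulo $2$ is exactly what you need and is correct, and you were right to flag the standing assumption $n\geq 1$ (for $n=0$ the statement would in fact fail).
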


\begin{proof}

Let $V(f) = \{p_1,\ldots,p_s\}$. Then $f = 1 + S_{p_i} + \cdots + S_{p_s}$ 
since $V(f) = V(1 + S_{p_1} + \cdots + S_{p_s})$. Each 
$S_{p_i}$ contains a monomial of the form $x_1 \cdots x_n$. Hence $x_1 \cdots x_n \in \supp(f)$ if and only if $|V(f)|$ is odd. 

\end{proof}

\begin{remark}
In the context of error-correcting codes, Proposition \ref{prop:odd} gives an alternative proof of the fact that if $r<m$, then $R(r,m)$ consists of even codes only, see for instance \cite{introcoding}.
\end{remark}

\subsubsection{Efficiency of the methods compared to Gr\"obner methods} \label{sec:eff}
It is natural to ask if the methods that we have developed in this section are comparable in performance with other existing methods. This is somewhat out of the reach of this paper, so instead we focus on a special class of boolean polynomials. 

As we will see, a straight forward implementation of Corollary \ref{cor:main} would perform drastically better than the Buchberger algorithm when it comes to determining $|V(f)|$, where $f$ is an element in this class.

Indeed, let $u_1, \ldots, u_m$ be relatively prime quadratic boolean monomials in $\mathbb{Z}_2[x_1,\ldots, x_n]$, where $n \geq 2m$. Let $f = u_1 + \cdots + u_m$ and let $I$ be the boolean ideal generated by $f$. By Corollary 
\ref{cor:main}, $$|V(f)| = 2^n - \binom{m}{1} \cdot 2^{n-2} + 2 \binom{m}{2} \cdot 2^{n-4} + \cdots + 2^{m-1} (-1)^{m} \binom{m}{m} \cdot 2^{n-2m}.$$ This expression may be simplified to
$$2^{n-2m}(2^{m-1}(2^{m+1} - 2^m +1)) = 2^{n-2m}(2^{2m-1} +2^{m-1})$$ using the binomial theorem and the identity $1 = (2-1)^m$.

Recall that the leading monomials of the elements in a Gr\"obner basis of an ideal $I$ with respect to some monomial order $\prec$ generates the initial of 
$I$ with respect to $\prec$. The set of monomials outside this monomial ideal is called the 
\emph{standard monomials} with respect to $I$ and $\prec$. When $I$ is zero dimensional and radical, the number of standard monomials equals $|V(I)|$.

Thus, to determine $|V(f)|$, one computes a Gr\"obner basis of the ideal 
$I= (x_1^2-x_1,\ldots,x_n^2-x_n,f)$ with respect to some monomial order $\prec$, and then counts the number of standard monomials of $I$ with respect to $\prec$.

In Proposition \ref{prop:monster}, we show that the Gr\"obner basis of $(x_1^2-x_1,\ldots,x_n^2-x_n,u_1 + \cdots + u_m)$ contains $2^m-1$ elements independent of monomial ordering, and the total number of monomials in the reduced Gr\"obner basis is
$\sum_{i=1}^{m} 2^{2i-2} (m-i+1)$.
Although a naive implementation of the lcm-lattice in order to compute the number $|V(u_1 + \cdots + u_m)|$ using Corollary \ref{cor:main} would require $2^m$ lcm-operations, the Buchberger algorithm with a reduction process for each computed S-polynomial, would require drastically more memory and operations than the lcm technique. 

\begin{lemma} \label{lemma:gbasis}
Let $u_1 = x_1 x_2, u_2 = x_3 x_4, \ldots, u_m = x_{2m-1} x_{2m}$ be boolean monomials in $\mathbb{Z}_2[x_1, \ldots, x_{2m}]$, let $f = u_1 +\cdots + u_m$ and let $\prec$ be any monomial ordering such that $u_1 \succ \cdots \succ u_m$. Let
\begin{align*}
G_0(f) & = \{x_1^2-x_1,\ldots,x_n^2-x_n\},\\
G_1(f) &= \{u_1 + \cdots + u_m\}, \\
G_2(f) &= \{ (u_2+\cdots + u_{m}) \cdot (x_{i_1}+1), \, : \, x_{i_1} \in \supp(u_1) \}, \\
G_3(f)&=  \{ (u_3+\cdots + u_{m}) \cdot (x_{i_1}+1) \cdot (x_{i_2}+1) \, : \, x_{i_1} \in \supp(u_1), x_{i_2} \in \supp( u_2)\}, \\
\vdots \\
G_m(f) &= \{u_m \cdot  (x_{i_1} + 1) \cdots (x_{i_{m-1}} + 1)\, : \, x_{i_1} \in \supp(u_1), \ldots, x_{i_{m-1}} \in \supp(u_{m-1}) \}.  \\
\end{align*}
Then $\cup G_i(f)$ is a reduced, minimal Gr\"obner basis of $(x_1^2-x_1,\ldots,x_n^2-x_n,f)$ with respect to $\prec$ consisting of $2^m-1+n$ elements and a total of $\sum_{i=1}^{m} 2^{2i-2} (m-i+1)+2n$ monomials.
\end{lemma}

Before we give the proof, we give an example which hopefully makes the picture clear.

\begin{example}
Consider the boolean polynomial $f = x_1 x_2 + x_3 x_4 + x_5 x_6 \in \mathbb{Z}_2[x_1,\ldots,x_6]$ and the lexicographical order induced by $x_1 \succ \cdots  \succ x_6$. Then $G_0(f) = \{x_1^2-x_1,x_2^2-x_2,x_3^2-x_3,x_4^2-x_4,x_5^2-x_5,x_6^2-x_6\}$,
$G_1(f) = \{x_1 x_2 + x_3 x_4 + x_5 x_6\}, G_2(f) = \{(x_3 x_4 + x_5 x_6) \cdot (x_1+1), (x_3 x_4 + x_5 x_6) \cdot (x_2+1)\}$ and  $G_3(f) = \{x_5 x_6 \cdot (x_1 + 1) \cdot (x_3 + 1), x_5 x_6 \cdot (x_1 +1)\cdot (x_4+1), x_5 x_6 \cdot (x_2 + 1)\cdot (x_3+1), x_5 x_6 \cdot (x_2 + 1) \cdot (x_4+1)\}$. We have $|G| = 2^3 - 1 + 6 = 13$ and the number of monomials in $G$ is  $\sum_{i=1}^{3} 2^{2i-2} (3-i+1)+2 \cdot 6 = 33.$
\end{example}

\begin{proof}[Proof of Lemma \ref{lemma:gbasis}]
Let $G(f) = \cup G_i(f)$ and let $I$ be the boolean ideal generated by $f$. The idea is to show that $G(f) \subseteq I$ and that there are $|V(f)|$ standard monomials with respect to the monomial ideal generated by the leading monomials of the elements in $G$. From this it follows that $G$ is a 
Gr\"obner basis. 

Suppose that we have proven that $G(f)$ is a Gr\"obner basis. That $G(f)$ is reduced is clear since by construction, no leading elements in $G(f)$ divide any other monomials occurring in polynomials in $G(f)$. That $G(f)$ is minimal follows by a weaker form of the previous argument: no leading element in $G(f)$ is divisible by any other leading element in $G(f)$. The number of elements in each $G_i(f)$, $i>0$, is equal to $2^{i-1}$, so the number of elements in $G(f)$ is $2^m - 1 +n$. The number of boolean monomials in each element in $G_i(f)$, $i > 0$, is $2^{i-1} (m-i+1)$, so the number of monomials in $G(f)$ is $\sum_{i=1}^{m} 2^{2i-2} (m-i+1) +2n$.

It rests to show that $G(f)$ is indeed a Gr\"obner basis.  
Let $h= (u_s+\cdots +u_m)(x_{i_1}+1) \cdots  (x_{i_{s-1}} + 1)$ be an arbitrary element in $G_s(f)$, $s > 0$. Since 
$x_{i_k} | u_k$, we have $ (x_{i_k} + 1) \cdot u_k = 0$ as boolean polynomials, so
$$hf = h \cdot (u_1 + \cdots + u_{s-1}) + h \cdot (u_{s} + \cdots +u_m) =  0+h = h$$ as boolean polynomials. In follows that we can use Proposition \ref{prop:equivalances}  to conclude that $h \in I$, from which it follows that $G(f) \subseteq I$.

Finally, to prove that the number of standard monomials with respect to the monomial ideal generated by the leading monomials of the elements in $G(f)$ is equal to $|V(f)|$, we proceed by induction on $m$.

When $m = 1, G(x_1 x_2) = \{x_1^2-x_1,x_2^2-x_2,x_1 x_2 \}$ and the set of standard monomials is $\{1,x_1,x_2\}$. Since $|V(x_1 x_2)| = 2^2 - 2^0 =3$, it follows that $G(x_1 x_2)$ is a Gr\"obner basis of $(x_1^2-x_1,x_2^2-x_2,x_1 x_2)$.

Suppose that the lemma holds true when $m = t$, that is, $f = u_1 + \cdots + u_t$ and $G(f)$ is a Gr\"obner basis of 
$(x_1^2-x_1,\ldots,x_{2t}^2-x_{2t},f)$ in $\mathbb{Z}_2[x_1,\ldots, x_{2t}]$ We will now show that 
$G(f+u_{t+1})$ is a Gr\"obner basis of  $(x_1^2-x_1,\ldots, x_{2t+2}^2-x_{2t+2}, f+u_{t+1})$ 
in $\mathbb{Z}_2[x_1,\ldots,x_{2t+2}]$. Let
\begin{align*}
S_{0,0}& = \{s : s \text{ is a standard monomial in $G(f)$} \},\\
S_{1,0}& = \{s \cdot x_{2t+1} : s \text{ is a standard monomial in $G(f)$} \},\\
S_{0,1}& = \{s \cdot x_{2t+2} : s \text{ is a standard monomial in $G(f)$} \},\\
S_{1,1}& = \{ x_1^{1-\alpha_1} \cdots x_{2t}^{1-\alpha_{2t}} \cdot x_{2t+1} \cdot x_{2t+2} : x^{\alpha} \text{ is a leading boolean monomial in 
$G(f)$} \}.
\end{align*}

We claim that $S = S_{0,0} \cup S_{0,1} \cup S_{1,0} \cup S_{1,1}$ is the set of standard monomials with respect to 
$G(f + x_{2t+1} \cdot x_{2t+2})$. Clearly the four sets are disjoint, and the equalities 
$|S_{0,0}| = |S_{1,0}| = |S_{0,1}| = 2^{2t-1} + 2^{t-1}$ and $S_{1,1} = 2^{2t-1} - 2^{t-1}$ hold by the induction assumption. From this it follows that  $|S| = 
2^{2t+1} + 2^t$. Thus, we are done if we can show that each set contains standard monomials only.

The leading boolean monomials of $G_i(f)$ equals the leading elements of $G_i(f+u_{t+1})$ when $i \leq t$. The leading elements of $G_{t+1}(f+u_{t+1})$ is $u_{t+1} \cdots x_{i_1} \cdots x_{i_t}$, $x_{i_j} \in \supp(u_{i_j})$. It follows that a standard monomial of $G(f)$ is a standard monomial of $G(f+u_{t+1})$, and if $s$ is a standard monomial of $G(f)$, then both $s \cdot x_{2t+1}$ and $s \cdot x_{2t+2}$ are standard monomials of $G(f+ u_{t+1})$. Thus $S_{0,0}, S_{1,0}, S_{0,1}$ contains standard monomials.  

If $x^{\alpha}$ is a leading boolean monomial in $G(f)$, then $\alpha_{2i} = \alpha_{2i+1} = 1$ for some $i$, and
$\alpha_{2j} = 0 \implies \alpha_{2j + 1}=1$ when $j < i$. Thus, if $\beta = 1 - \alpha_1,\ldots, 1- \alpha_{2t}$, then 
$\beta_{2i} = \beta_{2i+1} = 0$ and $\beta_{2j} = 1 \implies \beta_{2j+1} = 0$ for $j<i$. Hence $\beta$ is not a leading monomial, so it is a standard monomial of $G(f)$. By the preceding argument, it is also a standard monomial of 
$G(f+ u_{t+1})$. But since $\beta_{2i} = \beta_{2i+1} = 0$ for some $i$, also the monomial $x^{\beta} \cdot x_{2t+1} \cdot x_{2t+2}$ is standard. This shows that also $S_{1,1}$ contains standard monomials. The lemma now follows by induction.

\end{proof}

\begin{proposition} \label{prop:monster}
Let $u_1, \ldots, u_m$ be relatively prime quadratic boolean monomials in $\mathbb{Z}_2[x_1,\ldots,x_n]$. Let $\prec$ be any monomial ordering. 
Let $G$ be the reduced minimal Gr\"obner basis of $(x_1^2-x_1,\ldots,x_n^2-x_n,f)$ with respect to $\prec$. Then $|G| = 2^m-1+n$ and $G$ consists of $\sum_{i=1}^{m} 2^{2i-2} (m-i+1) + 2n$ boolean monomials.

\end{proposition}
\begin{proof}
Since we assume the $u_i$:s to be relatively prime, we have $2m \leq n.$ Without loss of generality, we can assume that $u_1 = x_1 \cdot x_2, u_2= x_3 \cdot x_4, \ldots, u_m = x_{2m-1} x_{2m}$. 

The special case $2m = n$ now follows from Lemma \ref{lemma:gbasis}. 

When $n > 2m$, the Gr\"obner basis of $(x_1^2-x_1,\ldots,x_n^2-x_n,f)$ is equal to the Gr\"obner basis of 
$(x_1^2-x_1,\ldots,x_{2m}^2-x_{2m},f)$ in $\mathbb{Z}_2[x_1,\ldots,x_m]$, together with the remaining polynomials $x_{2m+1}^2-x_{2m+1}, \ldots, x_n^2-x_n$. This can be seen by considering the S-polynomials.  Pairs, where both polynomials come from the Gr\"obner basis of $(x_1^2-x_1,\ldots,x_{2m}^2-x_{2m},f)$ in $\mathbb{Z}_2[x_1,\ldots,x_{2m}]$ obviously reduce to zero, while pairs containing a polynomial of the form $x_i^2-x_i$, $i > 2m$, reduce to zero by the prime criteria, see \cite{cox}. It is clear that this constructed Gr\"obner basis is both reduced and minimal.
It follows from Lemma \ref{lemma:gbasis} that $|G| = 2^m-1+m + (n-m)$ and that $G$ consists of $\sum_{i=1}^{m} 2^{2i-2} (m-i+1) + 2m + 2(n-m)$ boolean monomials.

\end{proof}

\subsection{Standard monomials with respect to the lexicographical ordering} \label{sec:standard}

As was first shown in \cite{cm}, the standard monomials with respect to the lexicographical ordering (lex) induced by $x_1 \succ \cdots \succ x_n$ of a vanishing ideal of affine points is a combinatorial object in that the standard monomials depend only on the positions were the coordinates of the points differ.

It is natural to try to give a description of the lex standard monomials in terms of the defining polynomial of a boolean ideal. Such a description is given in Corollary \ref{cor:std1}. In Theorem \ref{thm:rec}, we give a computable recursive formula that given a boolean monomial determines the number of boolean ideals having this boolean monomial as a standard monomial, and in Corollary \ref{cor:statements}, we show that this number is odd.

Now some notation. Let $\Bbbk$ be a field and let $P$ be a set of distinct points in $\Bbbk^n$. Let $\{\alpha_1, \ldots, \alpha_h\}$ be the set of $n$'th coordinates of the points in $P$.  Let $P_i$ denote the set of points in $P$ whose $n$'th coordinate is equal to $\alpha_i$. Partition $P$ into the $h$ disjoint non-empty subsets 
$P_1 \cup \cdots \cup P_h$. Denote by $\overline{P}$ the projection of $P$ to $\Bbbk^{n-1}$ by omitting the last coordinate.

\begin{lemma} \label{lemma:stdunion}
Let $SM_i$ be the set of standard monomials of $I(\overline{P_i})$ with respect to lex and $x_1 \succ \cdots \succ x_{n-1}$. Then 
$$\{ x_n^j \cdot m \, | \, m \text{ occurs in $j+1$ number of the $SM_i$'s} \}$$ 
is the set of standard monomials for $I(P)$ with respect to lex and  $x_1 \succ \cdots \succ x_{n}$. 
\end{lemma}

\begin{proof}
The lemma is just a restatement of Corollary 8 (i) in \cite{lexgame}.
\end{proof}

We will now use Lemma \ref{lemma:stdunion} to derive a result which we have not been able to find in the literature. In this paper, we will use it to derive two corollaries. But it is formulated for any finite field, and might be of interest in its own right. Therefore we state it as a theorem.

\begin{theorem} \label{thm:finitefieldstd}
Let $I$ be a radical zero-dimensional ideal in $\Bbbk[x_1,\ldots,x_n]$, where $\Bbbk = \{\alpha_1,\ldots, \alpha_q\}$ is a finite field and suppose that $V(I)$ is rational. Let 
$SM_i^{n}$ be the set of standard monomials of $(I,x_n - \alpha_i)$ with respect to the lexicographical ordering induced by $x_1 \succ \cdots \succ x_{n}$. Then 
$$SM(I) = \{ x_n^j \cdot m \, | \, m \text{ occurs in $j+1$ number of the $SM_i^n$'s} \}.$$ 
\end{theorem}

\begin{proof}
According to the assumptions, $I = I(P)$, where $P$ is a set of points in $\Bbbk^n$. The equality $(I,x_n - \alpha_i) = (1)$ holds if and only if there are no points $p \in P$ such that $p_n = \alpha_i$. Moreover $(I,x_n - \alpha_i) = (1)$ if and only if $SM_i^n = \{ \}$. 

Suppose, without loss of generality, that $(I,x_n - \alpha_i) \neq (1)$ for $i = 1, \ldots, h$, and
$(I,x_n - \alpha_i) = (1)$ for $i = h+1, \ldots, q$. Thus it is enough to show that 

$$SM(I) = \{ x_n^j \cdot m \, | \, m \text{ occurs in $j+1$ number of the $SM_i^n$'s, $i \leq h$} \}.$$

If $P_i$ is the set of points in $P$ whose $n$'th coordinate is equal to $\alpha_i$, then $(I,x_n - \alpha_i) = I(P_i)$. If we can prove that the standard monomials $SM_i$ of $I(\overline{P_i}) \subseteq \Bbbk[x_1,\ldots,x_{n-1}]$ agree with the standard monomials $SM_i^n$ of $I(P_i) \subseteq \Bbbk[x_1,\ldots,x_n]$, then we are done by Lemma \ref{lemma:stdunion}. But if $a \in I(\overline{P_i})$, then $a \in I(P_i)$, so a leading monomial in $I(\overline{P_i})$ is a leading monomial in $ I(P_i)$. Moreover, $x_n$ is a leading monomial in $I(P_i)$. Thus, $SM_i^n \subseteq SM_i$. Since $|P_i| = |\overline{P_i}|$, it must in fact hold that $SM_i^n = SM_i$.

\end{proof}

Let $S = \{s_1,\ldots, s_m\}$ be a subset of $\mathbb{Z}_2[x_1,\ldots,x_n]$. Then $f \cdot S$ is defined to be 
$\{f \cdot s_1,\ldots, f \cdot s_m\}$.

\begin{corollary} \label{cor:std1}
Let $I$ be a boolean ideal with defining polynomial $f$. Then 
\begin{align*}
&SM((f, x_1^2-x_1, \ldots, x_n^2-x_n)) =\\
&SM((f(x_1,\ldots, x_{n-1},0), x_1^2-x_1,\ldots, x_{n-1}^2-x_{n-1})  \cup \\
&SM((f(x_1,\ldots, x_{n-1},1), x_1^2-x_1,\ldots, x_{n-1}^2-x_{n-1}) \cup \\
&x_n \cdot (SM((f(x_1,\ldots, x_{n-1},0), x_1^2-x_1,\ldots, x_{n-1}^2-x_{n-1})) \cap \\
& SM((f(x_1,\ldots, x_{n-1},1), x_1^2-x_1,\ldots, x_{n-1}^2-x_{n-1})),
\end{align*}
where the standard monomials on the right hand side are understood to be elements in $\mathbb{Z}_2[x_1,\ldots, x_{n-1}].$
\end{corollary}

\begin{proof}
The ideal $I$ is radical and $V(I)$ is rational. Hence 
$$SM(I) = \{ m : m \text{ occurs in } SM_0 \text{ or in } SM_1 \} \cup \{ x_n m :  m \text{ occurs in } SM_0 \text{ and in } SM_1\}$$
by Theorem \ref{thm:finitefieldstd}.
\end{proof}

\begin{corollary} \label{cor:std2}
The monomial $x^{\alpha}$, where $\alpha_n = 1$, is standard with respect to $I(P)$ if and only if 
$x_1^{\alpha_1} \cdots x_{n-1}^{\alpha_ {n-1}}$ is standard with respect to $I(\overline{P_0})$ and $x_1^{\alpha_1} \cdots x_{n-1}^{\alpha_ {n-1}}$ is standard with respect to $I(\overline{P_1})$.
The monomial $ x^{\alpha}$, where $\alpha_n = 0$, is standard with respect to $P$ if and only if 
$x_1^{\alpha_1} \cdots x_{n-1}^{\alpha_ {n-1}}$ is standard with respect to $I(\overline{P_0})$ or $x_1^{\alpha_1} \cdots x_{n-1}^{\alpha_ {n-1}}$  is standard with respect to $I(\overline{P_1})$.
\end{corollary}
\begin{proof}
This is a transposed version of Corollary \ref{cor:std1}.
\end{proof}

An application of Corollary \ref{cor:std2} is the association of a binary decision tree to each standard monomial $x^{\alpha}$, exemplified in Figure \ref{fig:tree}. The $i$'th level of the tree consists of OR gates if $\alpha_{n-i} = 0$, and of AND gates if $\alpha_{n-i} = 1.$ 

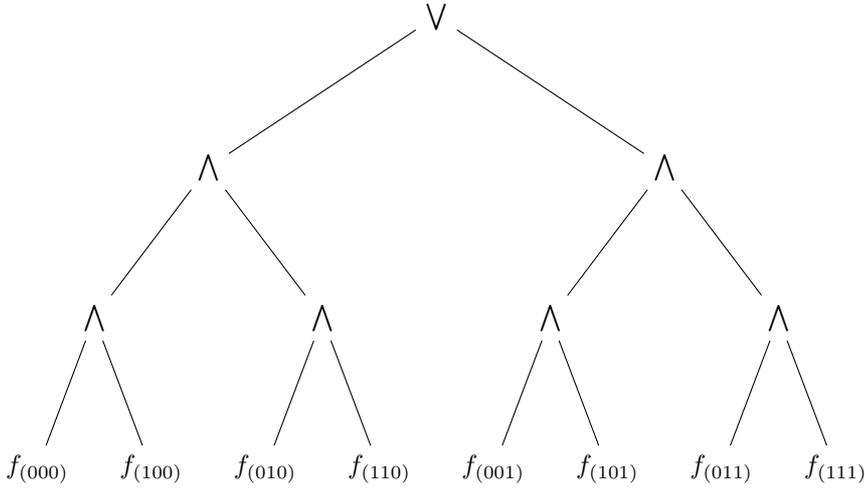
\begin{figure}[ht]  
\begin{tikzpicture} 
[level 3/.style={sibling distance=15mm},level distance=20mm,level 2/.style={sibling distance=30mm}, level 1/.style={sibling distance=60mm}]
\node {$\bigvee$}
	child {node {$\bigwedge$}
		child {node {$\bigwedge$}
			child {node {$f_{(000)}$}}
			child {node {$f_{(100)}$}}}
		child {node {$\bigwedge$}
			child {node {$f_{(010)}$}}
			child {node {$f_{(110)}$}}}}
	child {node {$\bigwedge$}
		child {node {$\bigwedge$}
			child {node {$f_{(001)}$}}
			child {node {$f_{(101)}$}}}
		child {node {$\bigwedge$}
			child {node {$f_{(011)}$}}
			child {node {$f_{(111)}$}}}};
			
\end{tikzpicture}
\caption{The tree for checking if the monomial $x_1 x_2$ is standard with respect to lex and $x_1 \succ x_2 \succ x_3$, where $f_{(0,0,0)}$ is defined to be $ f(0,0,0) + 1$ and where $a \wedge b = 1$ if (and only if) 
$a = b = 1$ and $a \vee b = 0$ if (and only if) $a= b=0$. Accordingly, the monomial $x_1 x_2$ is standard when the tree evaluates to $1$. The number of solutions in $\mathbb{Z}_2[f_{(000)}, f_{(001)}, \ldots, f_{(111)}]$ to this equation is $31$, 
so there are $31$ out of $256$ varieties/boolean ideals that contain $x_1 x_2$ as a standard monomial. Of course, this number can be directly computed using the function $\Omega$ from Theorem \ref{thm:rec}, i.e. $\Omega((1,1,0)) = 31.$ } \label{fig:tree}
\end{figure}

\begin{lemma} \label{lemma:booleanbijection}
Let $P(n)$ be the set of boolean polynomials in $\mathbb{Z}_2[x_1,\ldots,x_n]$.
Then the map $P(n-1) \times P(n-1) \to P(n)$, $(g_1,g_2) \mapsto g_1 \cdot x_n + g_2 \cdot (1+x_n)$ is a bijection. 
\end{lemma}
\begin{proof}
We have $|P(n)| = 2^{2^n} = 2^{2^{n-1}} \cdot 2^{2^{n-1}} = |P(n-1) \times P(n-1)|$, so it is enough to show that the map is injective.
Suppose that $g_1 \cdot x_n + g_2 \cdot (1+x_n) = g_3 \cdot x_n + g_4 \cdot (1+x_n)$. Then
$g_2 = g_4$, and this implies that $g_1 = g_3$. This shows that the map is injective, so it is a bijection.
\end{proof}

We are now ready to state the main theorem of this section.

\begin{theorem} \label{thm:rec}
Let $n>0$, let $\alpha$ be an element in $\{0,1\}^n$, let $\Omega(()) = 1$ and let $\Omega((\alpha_1,\ldots, \alpha_n))$ denote the number of boolean ideals such that $x^{\alpha}$ is a standard monomial with respect to lex and $x_1 \succ \cdots \succ x_n$. Then 
\begin{equation*}
\Omega((\alpha_1,\ldots,\alpha_n)) = 
\begin{cases}
\Omega(\overline{\alpha})^2 & \text{if }  \alpha_n = 1, \\
 2 \cdot \Omega(\overline{\alpha}) \cdot 2^{2^{n-1}} -\Omega(\overline{\alpha})^2 & \text{if } \alpha_n = 0. \\
\end{cases}
\end{equation*}
where $\overline{(\alpha_1,\ldots, \alpha_n)} = (\alpha_1,\ldots,\alpha_{n-1}).$

\end{theorem}

\begin{proof}
If $n = 1$, then $x_1$ is a standard monomial of the ideal $(x_1^2-x_1,0)$, while $1$ is a standard monomial of the ideals
$(x_1^2-x_1,0), (x_1^2-x_1,x_1)$ and  $(x_1^2-x_1,x_1+1).$ Thus $\Omega((1)) = \Omega(())^2$ and $\Omega((0)) = 2 \cdot \Omega(()) \cdot 2^{2^0} - \Omega(())^2.$

Suppose that $n > 1$.
By Lemma \ref{lemma:booleanbijection}, each boolean polynomial $f \in \mathbb{Z}_2[x_1,\ldots,x_n]$ can be uniquely written on the form 
$g_1 \cdot x_n + g_2 \cdot (1+x_n)$, with $g_1,g_2 \in \mathbb{Z}_2[x_1,\ldots,x_{n-1}]$.

Suppose that $\alpha_n = 1$. By Corollary \ref{cor:std2}, the monomial $x_1^{\alpha_1} \cdots x_{n}^{\alpha_{n}}$ is standard with respect to the boolean ideal generated by $f$
 if and only if $x_1^{\alpha_1} \cdots x_{n-1}^{\alpha_{n-1}}$ is standard with respect to the boolean ideal generated by $f(x_1,\ldots, x_{n-1},0) = g_2$ and
  $x_1^{\alpha_1} \cdots x_{n-1}^{\alpha_{n-1}}$ is standard with respect to the boolean ideal generated by $f(x_1,\ldots, x_{n-1},1)=g_1$.
  Thus the set of polynomials such that $x^{\alpha}$ is standard with respect to the associated boolean ideal is
 $$ \{g_1 \cdot x_n + g_2 \cdot (1+x_n) : x_1^{\alpha_1} \cdots x_{n-1}^{\alpha_{n-1}} \text{ is standard with respect to } g_1 \text{ and }g_2 \}.$$ 
 By Lemma \ref{lemma:booleanbijection}, the cardinality of this set is
$ \Omega(\overline{\alpha})^2$.
   
  Suppose that $\alpha_n = 0$. By Corollary \ref{cor:std2}, the monomial $x_1^{\alpha_1} \cdots x_{n}^{\alpha_{n}}$ is standard with respect to the boolean ideal generated by $f$
 if and only if $x_1^{\alpha_1} \cdots x_{n-1}^{\alpha_{n-1}}$ is standard with respect to the boolean ideal generated by $f(x_1,\ldots, x_{n-1},0) = g_2$ or
  $x_1^{\alpha_1} \cdots x_{n-1}^{\alpha_{n-1}}$ is standard with respect to the boolean ideal generated by $f(x_1,\ldots, x_{n-1},1)=g_1$. 
  Thus, the set of boolean polynomials is equal to 
  $$\{g_1\cdot x_n + g_2 \cdot ( 1+x_n)  : x_1^{\alpha_1} \cdots x_{n-1}^{\alpha_{n-1}} \text{ is standard with respect to }
  g_1 \text{ or } g_2\}.$$ 
  It follows that there are 
  $2^{2^{n-1}} \cdot 2^{2^{n-1}} - ( 2^{2^{n-1}} - \Omega(\overline{\alpha})) \cdot ( 2^{2^{n-1}} - \Omega(\overline{\alpha})) = 2 \cdot 2^{2^{n-1}} \Omega(\overline{\alpha}) -\Omega(\overline{\alpha})^2$ such boolean polynomials, where we
 have used that the number of boolean polynomials in $\mathbb{Z}_2[x_1,\ldots,x_{n-1}]$ such that $x_1^{\alpha_1} \cdots x_{n-1}^{\alpha_{n-1}}$ is not a standard monomial of the associated boolean ideal, is equal to 
$2^{2^{n-1}} - \Omega(\overline{\alpha})$.

\end{proof}

\begin{remark}
The recursive formula in Theorem \ref{thm:rec} can be rewritten as
\begin{equation*}
\Omega((\alpha_1,\ldots,\alpha_n)) = 
\begin{cases}
1 & \text{if } n = 0,\\  
(1-\alpha_n) \cdot \Omega(\overline{\alpha}) \cdot 2 \cdot 2^{2^{n-1}} - (-1)^{\alpha_n} \cdot  
 \Omega(\overline{\alpha})^2 & \text{otherwise.}\\
 \end{cases}
 \end{equation*}
\end{remark}

Although we have not been able to obtain a closed form of $\Omega$, the recursive formula can be used to prove statements about $\Omega$ based on induction.

\begin{corollary} \label{cor:statements}
\hspace{1cm}
\begin{enumerate}

\item $\Omega(\alpha)$ is odd.
\item $\Omega((\alpha_1,\ldots, \alpha_n)) = 2^{2^n} - \Omega((1-\alpha_1,\ldots, 1-\alpha_n)).$
\item $\sum_{\alpha \in \mathbb{Z}_2^n} \Omega(\alpha) = 2^{2^n + n - 1}$.
\end{enumerate}
\end{corollary}

\begin{proof} The proof of the first and the second statement is done by induction over $n$. The first is trivial. Here is the induction step for the second statement.
Let $\beta = (1-\alpha_1,\ldots, 1- \alpha_n)$. 
Suppose that $\alpha_n = 0$. Then 
$$\Omega(\beta) = \Omega(\overline{\beta})^2 = 2^{2^n} + \Omega(\overline{\alpha})^2 - 2 \cdot \Omega(\overline(\alpha)) \cdot 2^{2^{n-1}}= 2^{2^n} - \Omega(\alpha).$$
Suppose instead that $\alpha_n = 1$. Then
\begin{align*}
\Omega(\beta)
&= -\Omega(\overline{\beta})^2 + 2 \cdot \Omega(\overline{\beta}) \cdot 2^{2^{n-1}} \\
&=- ((2^{2^{n-1}} -  \Omega(\overline{\alpha}))^2 + 2 \cdot (2^{2^{n-1}} - \Omega(\overline{\alpha})) \cdot 2^{2^{n-1}}\\
&=-2^{2^n} - \Omega(\overline{\alpha})^2 + 2 \cdot \Omega(\overline{\alpha}) \cdot 2^{2^{n-1}} + 2 \cdot 2^{2^{n}} -2 \cdot \Omega(\overline{\alpha}) \cdot 2^{2^{n-1}}  \\ 
&= 2^{2^n} - \Omega(\overline{\alpha})^2  \\
&=2^{2^n} - \Omega(\alpha).
\end{align*}

The third statement follows from the second by grouping complementary exponent vectors together.

\end{proof}

\begin{remark}
The sequence $\sum_{\alpha \in \mathbb{Z}_2^i} \Omega(\alpha)$ for $i = 0,\ldots$ is equal to the sequence A028369 in OEIS.
\end{remark}

\section{Conclusion and further work}
We have obtained several ways to express the variety of a boolean polynomial (Theorem \ref{thm:explicit1}, Theorem \ref{thm:explicit2}, Corollary \ref{cor:main}, Theorem \ref{thm:invalg}), and in Theorem \ref{thm:fquad} we were able to prove the surprising fundamental identity $\phi^4(f) = f$.

Extensions of these results to other finite fields seem hard, but experiments show that Theorem \ref{thm:fquad} might be able to generalize for certain classes of polynomials over the field $\mathbb{Z}_3$.

When it comes to the standard monomials with respect to lex, we have been able to give a description of the distribution of the standard monomials among the set of boolean ideals. A natural continuation of this work is to examine whether it is possible to use Theorem \ref{thm:finitefieldstd} in order to describe the distribution of the standard monomials with respect to vanishing ideals of points over other finite fields than $\mathbb{Z}_2$. Another interesting problem is to determine under which circumstances it is possible to obtain the set of standard monomials of $\phi(f)$ from the set of standard monomials of $f$.

From a computational point of view, and as indicated by the performance analysis in Section \ref{sec:eff}, a natural next step is to implement and compare these methods with each other and with state of the art software for boolean ideals based upon the Buchberger algorithm, for instance PolyBoRi \cite{polybori}. One would expect that when the defining polynomial has a low number of terms, the methods proposed in this paper perform better, while if the defining polynomial has a large number of terms, the Buchberger algorithm would perform better. 
Also, the methods for computing the variety presented in this paper have interesting connections to the resolutions of square free monomial ideals, so techniques developed to minimize a given monomial resolution might be useful when implementing the proposed algorithms.

Keeping the computational perspective, a major weakness of the methods proposed in this paper
is that if we are given a system of polynomials as input, then we first have to compute the defining boolean polynomial, a computation that leads to a major term expansion. On the other hand, the problem of determining whether $(f_1 + 1 )  \cdots (f_m+1) + 1 = 1$ as boolean polynomials does not necessarily need to be equivalent to such a term expansion on the left hand side, and might be worth to look into, perhaps in connection to the polynomial identity problem, see \cite{pit}.

\end{document}